\documentclass[a4paper,11pt,reqno]{amsart}
\usepackage[utf8]{inputenc}
\usepackage{amsaddr}
\usepackage{calrsfs}
\usepackage{graphicx,color}
\usepackage{subfig}

\setlength{\parskip}{.75ex plus .25ex minus .25ex}
\setlength{\emergencystretch}{1em}
\setlength{\mathsurround}{1pt}

\addtolength{\textwidth}{6em}
\addtolength{\hoffset}{-3em}
\addtolength{\textheight}{2ex}
\addtolength{\voffset}{-1ex}


\numberwithin{equation}{section}


\theoremstyle{definition}
\newtheorem{dfn}{Definition}[section]
  
\theoremstyle{plain}
\newtheorem{thm}{Theorem}[section]
\newtheorem{pro}{Proposition}[section]

\theoremstyle{remark}
\newtheorem{rem}{Remark}[section]
\newtheorem{exa}{Example}[section]


\newcommand{\R}{\mathbb{R}}
\newcommand{\E}{\mathbb{E}}
\newcommand{\Var}{\mathbb{V}}
\renewcommand{\P}{\mathbb{P}}
\newcommand{\p}{\partial}
\newcommand\law{\mathrel{\overset{\makebox[0pt]{\mbox{\normalfont\tiny\sffamily law}}}{=}}}

\newcommand{\F}{\mathcal{F}}

\newcommand{\e}{\mathrm{e}}
\newcommand{\la}{\langle}
\newcommand{\ra}{\rangle}
\renewcommand{\H}{\mathcal{H}}
\newcommand{\1}{\mathbf{1}}
\renewcommand{\d}{\mathrm{d}}
\renewcommand{\t}{\mathbf{t}}


\begin{document}
\title[$n$th order fBm]
{Transfer Principle for $n$th order Fractional Brownian Motion with Applications to Prediction and Equivalence in Law}

\date{\today}

\author[Sottinen]{Tommi Sottinen}
\address{Department of Mathematics and Statistics, University of Vaasa, P.O. Box 700, FIN-65101 Vaasa, FINLAND}
\email{tommi.sottinen@iki.fi}

\author[Viitasaari]{Lauri Viitasaari}
\address{Department of Mathematics and Statistics, University of Helsinki, Helsinki, P.O. Box 68, FIN-00014 University of Helsinki,  FINLAND} 
\email{lauri.viitasaari@iki.fi}

\begin{abstract}
The $n$th order fractional Brownian motion was introduced by Perrin et al. \cite{Perrin-Harba-Berzin-Joseph-Iribarren-Bonami-2001}. It is the (upto a multiplicative constant) unique self-similar Gaussian process with Hurst index $H \in (n-1,n)$, having $n$th order stationary increments. We provide a transfer principle for the $n$th order fractional Brownian motion, i.e., we construct a Brownian motion from the $n$the order fractional Brownian motion and then represent the $n$the order fractional Brownian motion by using the Brownian motion in a non-anticipative way so that the filtrations of the $n$the order fractional Brownian motion and the associated Brownian motion coincide. By using this transfer principle, we provide the prediction formula for the $n$the order fractional Brownian motion and also a representation formula for all the Gaussian processes that are equivalent in law to the $n$th order fractional Brownian motion.
\end{abstract}


\keywords{fractional Brownian motion, stochastic analysis, transfer principle, prediction, equivalence in law}

\subjclass[2010]{Primary: 60G22; 
Secondary: 60G15, 
60G25, 
60G35, 
60H99
}

\maketitle


\section{Introduction}
The fractional Brownian motion is probably the most well-known generalisation of the Brownian motion. It is a centred Gaussian process $B_H$, depending on the Hurst parameter $H\in(0,1)$, that is $H$-self-similar and has stationary increments. In the centred Gaussian case these two properties characterize the process completely (upto a multiplicative constant). Nowadays the properties and stochastic analysis with respect to the fractional Brownian motion are studied extensively and widely understood. We refer to Mishura \cite{Mishura-2008} for details of fractional Brownian motion and its stochastic analysis.

The $n$th order fractional Brownian motion was introduced by Perrin et al. \cite{Perrin-Harba-Berzin-Joseph-Iribarren-Bonami-2001}. Their motivation was to extend the self-similarity index $H$ of the standard fractional Brownian motion beyond the limitation $H<1$. That is, the $n$th order fractional Brownian motion $B_H^{(n)}$ with Hurst parameter $H\in(n-1,n)$ is $H$-self-similar, and in the case $n=1$ one recovers the fractional Brownian motion. Moreover, the $n$th order fractional Brownian motion is $n$-stationary, meaning that its $n$th differences are stationary. As in the case of the fractional Brownian motion, these two properties: $H$-self-similarity and $n$-stationarity characterize the process in the centred Gaussian world (upto a multiplicative constant). 

Perrin et al. \cite{Perrin-Harba-Berzin-Joseph-Iribarren-Bonami-2001} defined the $n$th order fractional Brownian motion by using the so-called Mandelbrot--Van Ness representation \cite{Mandelbrot-van-Ness-1968} of the fractional Brownian motion. However, while the kernel in the Mandelbrot--Van Ness representation is rather simple, the representation requires the knowledge to the infinite past of the generating Brownian motion, which is very problematic in many applications. In this article, we propose to define the $n$th order fractional Brownian motion by using the Molchan--Golosov representation \cite{Molchan-Golosov-1969} of the fractional Brownian motion. As such, we obtain a compact interval representation with the minor cost of having a slightly more complicated kernel. In addition, we show that the filtrations of the $n$th order fractional Brownian motion and the standard Brownian motion generating it coincide. This is very important for filtering and prediction. In addition, we provide a transfer principle for the $n$th order fractional Brownian motion which can be used to develop stochastic calculus in an elementary and simple way by using the stochastic calculus with respect to the standard Brownian motion.

The rest of the paper is organised as follows. In Section \ref{sec:prel} we recall some elementary definitions and preliminaries. We also recall the transfer principle with respect to the fractional Brownian motion. In section \ref{sec:nfbm} we present our main results,  which we apply to the equivalence of laws problem and prediction problem in Section \ref{sec:applications}. We end the paper with a simulation study in Section \ref{sec:simu} and some concluding remarks in Section \ref{sec:conclusion}. 

\section{Preliminaries}
\label{sec:prel}
We begin by recalling the concept of $H$-self-similarity. 

\begin{dfn}
Let $X = (X_t)_{t\geq 0}$ be a stochastic process and $H>0$. The stochastic process $X$ is called $H$-self-similar if for any $a>0$ we have
$$
X_{at} \law a^{H}X_t, \quad t\ge 0,
$$
where $\law$ denotes the equality of the finite dimensional distributions.
\end{dfn}

\begin{dfn}
Denote an increment of size $l$ of a function $g$ by
$$
\Delta_l g(x) = g(x+l)-g(x).
$$
For $k\geq 1$, the $k$th order increment of size $l$ is defined recursively by
$$
\Delta_l^{(k)} g(x) = \Delta_l^{(k-1)}g(x+l)-\Delta_l^{(k-1)}g(x).
$$
where the $0$th-order increment is $\Delta_l^{(0)}g(x)=g(x)$.
\end{dfn}

\begin{exa}
The $2$nd order increment of size $l$, $\Delta_l^{(2)}g(x)$, is given by 
$$
\Delta_l^{(2)}g(x) = g(x+2l)-2g(x+l)+g(x)
$$
which corresponds to the usual second order increment. 
\end{exa}

\begin{dfn}
A stochastic process $X$ is called $n$-stationary if for every $l>0$ the process $\Delta_l^{(k)}X$ is non-stationary for $k=0,1,\ldots,n-1$, but the process $\Delta_l^{(n)}X$ is stationary.
\end{dfn}
\begin{rem}
Note that the definition of $n$-stationary is simply a continuous time analogue of the classical notion of difference stationarity, used e.g. in time series analysis and ARIMA-processes \cite{bro-dav}. 
\end{rem}

Recall that the fractional Brownian motion $B_H$ with Hurst index $H\in(0,1)$ is a centred Gaussian process having the covariance function 
\begin{equation}
\label{eq:cov_fbm}
r_H(t,s) = \frac{1}{2}\left[|t|^{2H} + |s|^{2H} - |t-s|^{2H}\right], \quad t,s\in\R.
\end{equation}

It is easy to check that the process $B_H$ is not stationary, but has stationary increments, i.e., the fractional Brownian motion is $1$-stationary.  It is also easy to check that the process $B_H$ is $H$-self-similar, and that the valid range for $H$ is $H\in(0,1]$. The case $H=1$ is obviously a degenerate one: $B_H(t) = t B_H(1)$.

The fractional Brownian motion $B_H$ is connected to the standard Brownian motion $W$ via integral representations. We begin by recalling the Mandelbrot--Van Ness representation \cite{Mandelbrot-van-Ness-1968}  of the fractional Brownian motion $B_H$ on the real line with Hurst index $H\in(0,1)$: 
\begin{equation}\label{eq:mvn}
B_H(t) =
\int_{-\infty}^t g_H(t,u) \, \d W(u), \quad t\in\R,
\end{equation}
where $W$ is a standard Brownian motion and the kernel is
$$
g_H(t,u)
=
\frac{1}{\Gamma(H+\frac12)}\left[(t-u)^{H-\frac12} - (-u)^{H-\frac12}_+\right].
$$
Here $x_+ := x\vee 0 := \max(x,0)$ and $\Gamma$ denotes the Gamma function:
$$
\Gamma(x) = \int_0^\infty t^{x-1} \e^{-t}\, \d t.
$$ 

\begin{rem}
We remark that \eqref{eq:mvn} defines the fractional Brownian motion on the whole real line, but the filtrations of the fractional Brownian motion $B_H$ and the generating Brownian motion $W$ coincides only for $t>0$. This is problematic for many applications.
\end{rem}

For many practical applications it is useful to represent a process as an integral with respect to a Brownian motion on a compact interval. In the case of the fractional Brownian motion, one can use the so-called invertible Molchan-Golosov representation (see, for example,  \cite{Norros-Valkeila-Virtamo-1999,Pipiras-Taqqu-2001}).
\begin{pro}[\cite{Molchan-2003,Molchan-Golosov-1969}]
\label{pro:fbm_MG}
The Molchan--Golosov representation of the fractional Brownian motion $B_H$ with $H\in(0,1)$ is
\begin{equation}\label{eq:mg}
B_H(t) = \int_0^t k_H(t,s)\, \d W(s), \quad t\ge 0,
\end{equation}
where
\begin{eqnarray}
\lefteqn{k_H(t,s) }\nonumber \\
\label{eq:kernel}
&=& d_H\left[\left(\frac{t}{s}\right)^{H-\frac{1}{2}}(t-s)^{H-\frac12}
-\left(H-\frac{1}{2}\right)s^{\frac{1}{2}-H}\int_s^t z^{H-\frac{3}{2}}(z-s)^{H-\frac{1}{2}}\, \d z\right],  
\end{eqnarray}
with normalising constant
$$
d_H = \sqrt{\frac{2H\Gamma(\frac{3}{2}-H)}{\Gamma(H+\frac{1}{2})\Gamma(2-2H)}}.
$$
Moreover, the representation \eqref{eq:mg} is invertible, i.e., the Brownian motion $W$ is constructed from the fractional Brownian motion $B_H$ by
\begin{equation}\label{eq:mg-inv}
W(t) = \int_0^t k^{(-1)}_H(t,u)\, \d B_H(u),
\end{equation}
where
\begin{eqnarray*}
\lefteqn{k^{(-1)}_H(t,u)}\\
&=& d_H' \left[\left(\frac{t}{s}\right)^{H-\frac{1}{2}}(t-s)^{\frac12-H}
-\left(H-\frac{1}{2}\right)s^{\frac{1}{2}-H}\int_s^t z^{H-\frac{3}{2}}(z-s)^{H-\frac{1}{2}}\, \d z\right],
\end{eqnarray*}
with normalising constant
$$
d_H' = \frac{\Gamma(H+\frac{1}{2})\Gamma(2-2H)}{\mathrm{B}\left(\frac12-H,H+\frac12\right)\sqrt{\left(2H+\frac12\right)\Gamma(\frac{1}{2}-H)}}.
$$
Here $\mathrm{B}$ denotes the Beta function:
$$
\mathrm{B}(a,b) = \frac{\Gamma(a)\Gamma(b)}{\Gamma(a+b)}.
$$
\end{pro}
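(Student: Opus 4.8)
The plan is to verify the representation \eqref{eq:mg} by matching covariances and to obtain the inversion \eqref{eq:mg-inv} by inverting the associated Wiener-integral operator. Since the right-hand side of \eqref{eq:mg} is a Wiener integral of a deterministic kernel it is a centred Gaussian process, and so is $B_H$; hence it suffices to check that the two covariance functions coincide. By the It\^o isometry this reduces to the deterministic kernel identity
\begin{equation*}
\int_0^{s\wedge t} k_H(t,u)\, k_H(s,u)\, \d u = r_H(s,t), \qquad s,t\ge 0,
\end{equation*}
with $r_H$ given in \eqref{eq:cov_fbm}.

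To establish this identity for $H>\frac12$ I would first rewrite $k_H(t,s)$ in the single-integral form $k_H(t,s)=(H-\tfrac12)\,d_H\, s^{\frac12-H}\int_s^t (v-s)^{H-\frac32} v^{H-\frac12}\,\d v$, which is obtained by undoing the integration by parts in \eqref{eq:kernel}. Substituting this into the double integral and applying Fubini's theorem, the computation collapses onto the inner integral $\int_0^{v\wedge w} u^{1-2H}(v-u)^{H-\frac32}(w-u)^{H-\frac32}\,\d u$. The crucial step is to evaluate this as a Beta integral, the outcome being a constant multiple of $|v-w|^{2H-2}$. One is then left with the classical double-integral representation $r_H(s,t)=H(2H-1)\int_0^s\!\int_0^t |v-w|^{2H-2}\,\d v\,\d w$, together with a check that the accumulated Gamma and Beta constants combine to reproduce the correct normalisation. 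The case $H<\frac12$ is handled analogously, but the kernel must be kept in the integrated-by-parts form \eqref{eq:kernel}, since the exponent $H-\frac32$ renders the naive single-integral divergent.

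For invertibility, let $K_H$ denote the operator $(K_H f)(t)=\int_0^t k_H(t,s)\,f(s)\,\d s$ acting on functions on a fixed interval $[0,T]$. The kernel $k_H$ is built from Riemann--Liouville fractional integration operators, and I would exploit their semigroup composition rule to show that the integral operator with kernel $k_H^{(-1)}$ is a two-sided inverse of $K_H$: composing the two operators in the appropriate order, the boundary terms produced by integration by parts cancel and the composition reduces to the identity. Applying this inverse operator to \eqref{eq:mg} and using the associativity of Wiener integration then yields \eqref{eq:mg-inv}, reconstructing $W$ from $B_H$.

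I expect the covariance identity to be the main obstacle: although elementary in principle, the evaluation of the singular double integral demands careful bookkeeping of the Gamma and Beta constants and a clean separation of the two regimes $H>\frac12$ and $H<\frac12$, since the sign of $H-\frac12$ changes both the convergence of the inner integral and the direction of the required integration by parts. The second delicate point is to confirm that $k_H^{(-1)}$ genuinely inverts $K_H$, which hinges on the fractional semigroup identity and on the vanishing of the boundary terms.
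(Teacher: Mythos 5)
The paper offers no proof of this proposition: it is quoted as a classical result with attribution to Molchan--Golosov, so there is no internal argument to compare yours against. Judged on its own terms, your programme is the standard one from the literature (Norros--Valkeila--Virtamo, Pipiras--Taqqu): match covariances via the It\^o isometry to establish \eqref{eq:mg}, then invert the Volterra operator using the semigroup property of Riemann--Liouville fractional integrals to get \eqref{eq:mg-inv}. The single-integral form $k_H(t,s)=(H-\tfrac12)d_H\,s^{\frac12-H}\int_s^t v^{H-\frac12}(v-s)^{H-\frac32}\,\d v$ for $H>\tfrac12$ is correct, and the reduction via Fubini to the inner integral is the right move. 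One imprecision worth flagging: that inner integral evaluates to a constant times $(vw)^{\frac12-H}\,|v-w|^{2H-2}$, not just $|v-w|^{2H-2}$; the factor $(vw)^{\frac12-H}$ is exactly what cancels the $v^{H-\frac12}w^{H-\frac12}$ coming from the outer integrals, and without it you would not land on the representation $r_H(s,t)=H(2H-1)\int_0^s\int_0^t|v-w|^{2H-2}\,\d v\,\d w$. Also, for $H<\tfrac12$ that double-integral representation itself is unavailable (the constant $H(2H-1)$ is negative and the integral is singular), so ``handled analogously'' understates the work; the usual remedy is to verify the covariance through fractional-calculus identities rather than through this representation. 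Finally, in the inversion step the integral $\int_0^t k_H^{(-1)}(t,u)\,\d B_H(u)$ needs a meaning before ``associativity of Wiener integration'' can be invoked --- this is exactly the point of Remark \ref{rem:mg-inv} and the transfer principle --- but as a sketch of a known theorem your outline identifies the right ingredients and the genuine difficulties.
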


\begin{rem}
We emphasise the invertible role of the Molchan--Golosov representation.  We have that the filtrations $\F_W(t)=\sigma\{ W(s), s\le t\}$ and $\F_{B_H}(t)=\sigma\{ B_H(s), s\le t\}$ coincide.  This is of paramount importance in, for example, prediction and filtering.  
\end{rem}

\begin{rem}\label{rem:mg-inv}
The Wiener integral in \eqref{eq:mg-inv} can be understood as a Riemann--Stieltjes integral, see \cite{Norros-Valkeila-Virtamo-1999}. More generally it can be understood by using a so-called transfer principle, as will be explained the the next subsection below.
\end{rem}

\subsection{Transfer principle for the fractional Brownian motion}
In this subsection we recall the Wiener integral and the transfer principle for the fractional Brownian motion. For this purposes we consider the fractional Brownian motion $B_H$ on some compact interval $[0,T]$. 


\begin{dfn}[Isonormal process]\label{dfn:isonormal}
The isonormal process associated with the fractional Brownian motion $B_H = ( B_H(t), t\in [0,T])$ with the Hurst index $H\in(0,1)$ is the centred Gaussian family $(B_H(h), h\in\H_H)$, where the Hilbert space $\H_H = \H_H([0,T])$ is generated by the covariance $r_H$ given by \eqref{eq:cov_fbm} as follows:
\begin{enumerate}
\item indicators $\1_t := \1_{[0,t)}$, $t\le T$, belong to $\H_H$.
\item $\H_H$ is endowed with the inner product $\la \1_t,\1_s\ra_{\H_H} := r_H(t,s)$, 
\end{enumerate}
and the centred Gaussian family is then defined by the covariance
$$
\E\left[B_H(h) B_H(\tilde h)\right] = \la h, \tilde h\ra_{\H_H}. 
$$
\end{dfn}

Definition \ref{dfn:isonormal} states that $B_H(h)$ is the image of $h\in\H_H$ in the isometry that extends the relation
$$
B_H\left(\1_t\right) := B_H(t)
$$
linearly. This gives rise to the definition of Wiener integral with respect to the fractional Brownian motion.

\begin{dfn}[Wiener integral]\label{dfn:wi}
$B_H(h)$ is the \emph{Wiener integral} of the element $h\in\H_H$ with respect to $B_H$, and it is denoted by
$$
\int_0^T h(t)\, \d B_H(t).
$$   
\end{dfn}
\begin{rem}
Due to the completion under the inner product $\la\cdot,\cdot\ra_{\H_H}$ it may happen that the space $\H_H$ is not a space of functions, but contains distributions. Indeed, it was shown by Pipiras and Taqqu \cite{Pipiras-Taqqu-2001} that the space $\H_H$ is a proper function space only in the case $H\leq \frac12$, while it contains distributions for $H>\frac12$. We also note that in the case of a standard Brownian motion, that is $H=\frac12$, we have $\H_{\frac12} = L^2([0,T])$.
\end{rem}

The kernel representation of the fractional Brownian motion can be used to provide a transfer principle for Wiener integrals. In other words, the Wiener integrals with respect to the fractional Brownian motion can be transferred to a Wiener integrals with respect to the corresponding Brownian 
motion. To do this,  we define a dual operator $k_H^*$ on $L^2([0,T])$ associated with the kernel $k_H$ by extending the relation
\begin{equation}\label{eq:kstar-distr}
k_H^*\1_t = k_H(t,\cdot)
\end{equation}
linearly and closing it in $\H_H$. 

\begin{rem}
For general $H\in(0,1)$ and step functions $f$ we have
\begin{equation}\label{eq:kstar}
(k_H^* f)(t) = k_H(T,t)f(t) + \int_t^T \left[ f(u) -f(t)\right]\, \frac{\partial k_H(u, t)}{\partial u}\, \d u.
\end{equation}
If, moreover, $H\in(\frac12,1)$, then \eqref{eq:kstar} can be simplified into
$$
(k_H^* f)(t) = \int_t^T f(u)\, \frac{\partial k_H(u, t)}{\partial u}\, \d u,
$$
since for $H>\frac12$ we have $k_H(t,t)=0$.
\end{rem}

\begin{rem}
The dual operator $k_H^*$ depends on the interval $[0,T]$, even though the kernel $k_H$ does not.  This is the reason we have to consider the Hilbert space $\H_H = \H_H([0,T])$.
\end{rem}

\begin{thm}[\cite{Alos-Mazet-Nualart-2001}]
\label{thm:fbm_transfer}
Let $k_H^*$ be given by \eqref{eq:kstar-distr}. Then $k_H^*$ provides an isometry between $\H_H$ and $L^2([0,T])$. Moreover, for any $f \in \H_H$ we have
$$
\int_0^T f(u)\, \d B_H(u) = \int_0^T (k^*_H f)(u)\, \d W(u).
$$
\end{thm}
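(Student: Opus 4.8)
The plan is to verify both assertions first on the dense set of step functions, where everything reduces to the defining relation \eqref{eq:kstar-distr}, namely $k_H^*\1_t = k_H(t,\cdot)$, and then to extend by linearity and continuity. Since the isometry property and the transfer formula are both identities between continuous maps, it suffices to check them on the indicators $\1_t$.

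First I would establish that $k_H^*$ preserves inner products. For indicators $\1_t,\1_s$ the relation \eqref{eq:kstar-distr} gives $k_H^*\1_t = k_H(t,\cdot)$, so
$$
\la k_H^*\1_t, k_H^*\1_s\ra_{L^2([0,T])} = \int_0^{t\wedge s} k_H(t,u)\,k_H(s,u)\,\d u = \E\left[B_H(t)B_H(s)\right] = r_H(t,s),
$$
where the middle equality is the Molchan--Golosov representation \eqref{eq:mg} combined with the isometry of the Wiener integral with respect to $W$. Since $r_H(t,s) = \la\1_t,\1_s\ra_{\H_H}$, the operator $k_H^*$ is inner-product preserving on the span of indicators. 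Because $\H_H$ is by construction the completion of this span under $\la\cdot,\cdot\ra_{\H_H}$, the map extends to a linear isometry of $\H_H$ into $L^2([0,T])$.

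Next I would verify the transfer formula. For an indicator the left-hand side is, by Definition \ref{dfn:wi}, the Wiener integral $B_H(\1_t)=B_H(t)$, while the right-hand side is $\int_0^T k_H(t,u)\,\d W(u)=B_H(t)$ by \eqref{eq:mg}; the two agree, and by linearity the identity holds for all step functions. Now $f\mapsto\int_0^T f\,\d B_H = B_H(f)$ is, by Definition \ref{dfn:isonormal}, an isometry from $\H_H$ into the Gaussian space $L^2(\Omega)$, while $g\mapsto\int_0^T g\,\d W$ is an isometry from $L^2([0,T])$ into $L^2(\Omega)$. Composing the latter with $k_H^*$ gives a continuous map on $\H_H$ coinciding with $f\mapsto B_H(f)$ on the dense set of step functions, hence on all of $\H_H$.

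The step I expect to be most delicate is surjectivity --- showing that $k_H^*$ maps \emph{onto} $L^2([0,T])$ and not merely into a proper closed subspace --- together with the accompanying subtlety that for $H>\frac12$ the completion $\H_H$ contains genuine distributions rather than functions, so one must check that \eqref{eq:kstar-distr} really closes to an everywhere-defined operator. Surjectivity is where the invertibility recorded in Proposition \ref{pro:fbm_MG} enters: since the inverse kernel $k^{(-1)}_H$ recovers $W$ from $B_H$, the processes $B_H$ and $W$ generate the same Gaussian $L^2$-space, and any $\phi\in L^2([0,T])$ orthogonal to every $k_H(t,\cdot)$ would render $\int_0^T\phi\,\d W$ orthogonal to each $B_H(t)$ and hence to that whole space, forcing $\phi=0$. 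Thus the functions $k_H(t,\cdot)$ span a dense subspace, and as the isometric image $k_H^*(\H_H)$ is closed it must be all of $L^2([0,T])$.
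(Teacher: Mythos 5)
The paper does not prove this theorem itself --- it is stated as a cited result --- so the right comparison is with the authors' sketch for the $n$th-order analogue (Theorem 3.2), and your argument is correct and follows essentially that same route: verify the isometry and the transfer identity on indicators/step functions using the Molchan--Golosov representation together with the It\^o isometry, then extend by linearity and density to the completion $\H_H$. Your final paragraph establishing surjectivity of $k_H^*$ onto $L^2([0,T])$ --- via the invertibility of the representation, which identifies the first Gaussian chaos of $B_H$ with that of $W$ and forces any $\phi\in L^2([0,T])$ orthogonal to all $k_H(t,\cdot)$ to vanish --- is a correct and worthwhile addition that the paper's density sketch leaves implicit.
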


\section{The $n$th order fractional Brownian motion}
\label{sec:nfbm}
Perrin et al. \cite{Perrin-Harba-Berzin-Joseph-Iribarren-Bonami-2001} defined the $n$th order fractional Brownian motion by using the Mandelbrot--Van Ness \cite{Mandelbrot-van-Ness-1968} representation of the fractional Brownian motion $B_H$ with Hurst index $H\in(0,1)$:

\begin{dfn}[\cite{Perrin-Harba-Berzin-Joseph-Iribarren-Bonami-2001}]\label{dfn:nfbm-mvn}
Let $W=(W(u), u\in\R)$ be the two-sided standard Brownian motion and let $\Gamma$ be the gamma function. The $n$th order fractional Brownian motion with Hurst index $H\in(n-1,n)$ is defined as
\begin{equation}\label{eq:mvn-nfbm}
B^{(n)}_{H}(t) =
\int_{-\infty}^t g_H^{(n)}(t,u)\, \d W(u), \quad t\in\R,
\end{equation}
where
\begin{eqnarray}
g_H^{(n)}(t,u) &=&
\frac{1}{\Gamma(H+\frac12)}
\left[ (t-u)^{H-\frac12}-(-u)_+^{H-\frac12} -\cdots-\right.  \nonumber \\
& & \left.
\left(H-\frac12\right)\cdots\left(H-\frac{2n-3}{2}\right)
(-u)_+^{H-n+\frac12}\frac{t^{(n-1)}}{(n-1)!}\right]. \label{eq:gn}
\end{eqnarray}
\end{dfn}

\begin{rem}
In the case $n=1$, the classical fractional Brownian motion is recovered, as Definition \ref{dfn:nfbm-mvn} reduces to the Mandelbrot--Van Ness representation of the fractional Brownian motion.
\end{rem}

\begin{rem}\label{rem:properties}
Some properties of the $n$th order fractional Brownian motion provided in \cite{Perrin-Harba-Berzin-Joseph-Iribarren-Bonami-2001} are 
\begin{enumerate}
\item $B_H^{(n)}$ is $n$-stationary,
\item $B_H^{(n)}$ is $H$-self-similar,
\item $B_H^{(n)}$ has $n-1$ times continuously differentiable paths. In particular, 
\begin{equation}\label{eq:diff-nfbm}
\frac{\d}{\d t} B^{(n)}_H(t) = B^{(n-1)}_{H-1}(t).
\end{equation}
\end{enumerate}
Properties (i) and (ii) are unique to the $n$th order fractional Brownian motion in the class of centred Gaussian processes, i.e., they can be used as a qualitative definition.  Property (iii) follows from the properties (i) and (ii).
\end{rem}

The covariance function $r_H^{(n)}(t,s) = \E[B^{(n)}_H(t) B^{(n)}_H(s)]$ of the $n$th order fractional Brownian motion is 
\begin{eqnarray*}
\lefteqn{r_H^{(n)}(t,s)} \nonumber \\
&=&
\frac{(-1)^{(n)}C_H^{(n)}}{2}
\left\{
|t-s|^{2H}-\sum_{j=1}^{(n-1)}(-1)^j\binom{2H}{j}
\left[ \left(\frac{t}{s}\right)^j|s|^{2H}+\left(\frac{s}{t}\right)^j|t|^{2H}\right]
\right\}, \label{eq:cov}
\end{eqnarray*}
where
$$
\binom{\alpha}{j} = \frac{\alpha(\alpha-1)\cdots(\alpha - (j-1))}{j!},
$$
and
$$
C_H^{(n)}
=
\frac{1}{\Gamma(2H+1)|\sin(\pi H)|}.
$$
For the variance we have
$$
\Var\left[B_H^{(n)}(t)\right] = C^{(n)}_H\binom{2H-1}{n-1}|t|^{2H}. 
$$

The following theorem provides an invertible Volterra representation on compact interval for the $n\mathrm{th}$ order fractional Brownian motion with respect to a standard Brownian motion generated from it.

\begin{thm}
\label{thm:nfbm_MG}
Let $n\geq 1$ be an integer and let $H\in(n-1,n)$. 
Let $W = (W(u), u\ge 0)$ be a one-sided standard Brownian motion.
Define a sequence of Volterra kernels $k_H^{(n)}$ recursively as
\begin{eqnarray}
k_H^{(1)}(t,u) &=& k_H(t,u), \label{eq:kernel_rec-1} \\ 
\label{eq:kernel_rec-2}
k_H^{(n)}(t,u) &=& \int_u^t k_{H-1}^{(n-1)}(s,u)\, \d s.
\end{eqnarray}
Then 
\begin{equation}\label{eq:mg-nfbm} 
B_H^{(n)}(t) = \int_0^t k_H^{(n)}(t,u)\, \d W(u), \quad t\ge 0,
\end{equation}
defines an $n$th order fractional Brownian motion. Moreover, the Brownian motion $W$ can be recovered from $B_H^{(n)}$ by 
\begin{equation}\label{eq:mg-nfbm-inv}
W(t)
=
\int_0^t k^{(-1)}_{H-n+1}(t,u)\, \d\, \frac{\d^{n-1}}{\d u^{n-1}}B_H^{(n)}(u).
\end{equation}
In particular, the filtrations of $W$ and $B_H^{(n)}$ coincide: $\F_W(t) = \F_{B_H^{(n)}}(t)$ for all $t\ge 0$.
\end{thm}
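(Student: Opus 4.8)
The plan is to argue by induction on the order $n$. The base case $n=1$ is precisely the invertible Molchan--Golosov representation of Proposition \ref{pro:fbm_MG}, since $k_H^{(1)}=k_H$ and $H\in(0,1)$. The structural observation driving the induction is that the kernel recursion \eqref{eq:kernel_rec-2} mirrors the differentiation property \eqref{eq:diff-nfbm}: writing $k_H^{(n)}(t,u)=\int_u^t k_{H-1}^{(n-1)}(s,u)\,\d s$ shows both that $k_H^{(n)}(t,t)=0$ and that $\partial_t k_H^{(n)}(t,u)=k_{H-1}^{(n-1)}(t,u)$, so that integrating in $s$ corresponds to ``integrating'' the process, raising the order from $n-1$ to $n$ and the Hurst index from $H-1$ to $H$. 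Whenever $H\in(n-1,n)$ with $n\ge 2$ we have $H-1\in(n-2,n-1)$, so the induction stays within the admissible range, and the iterated kernels are obtained by integrating the (possibly singular) base kernel $k_{H-n+1}$ repeatedly, which renders $k_H^{(n)}(t,\cdot)\in L^2([0,t])$ for $n\ge 2$; thus the Wiener integrals below are well defined.

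For the representation \eqref{eq:mg-nfbm}, set $X(t):=\int_0^t k_H^{(n)}(t,u)\,\d W(u)$. Inserting the recursion and applying a stochastic Fubini theorem over the triangle $\{0\le u\le s\le t\}$, I would obtain
\[
X(t)=\int_0^t\left(\int_u^t k_{H-1}^{(n-1)}(s,u)\,\d s\right)\d W(u)=\int_0^t\left(\int_0^s k_{H-1}^{(n-1)}(s,u)\,\d W(u)\right)\d s=\int_0^t B_{H-1}^{(n-1)}(s)\,\d s,
\]
where the inner Wiener integral equals $B_{H-1}^{(n-1)}(s)$ by the induction hypothesis. It then remains to prove an integration lemma: if $Y$ is a centred, $(H-1)$-self-similar, $(n-1)$-stationary Gaussian process, then $Z:=\int_0^\cdot Y(s)\,\d s$ is centred Gaussian, $H$-self-similar and $n$-stationary, hence an $n$th order fractional Brownian motion by the characterization recalled in Remark \ref{rem:properties}. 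Self-similarity follows from $Z(at)=a\int_0^t Y(au)\,\d u$ together with $(Y(au))_u\law(a^{H-1}Y(u))_u$, giving $Z(at)\law a^H Z(t)$; $n$-stationarity follows from $\Delta_l^{(n)}Z(t)=\int_0^l\Delta_l^{(n-1)}Y(t+v)\,\d v$, which is stationary in $t$ because $\Delta_l^{(n-1)}Y$ is. That the order is exactly $n$ is forced by $H\in(n-1,n)$, which is incompatible with $(n-1)$-stationarity of an $H$-self-similar process. Since the $n$th order fractional Brownian motion is determined only up to a multiplicative constant, no separate normalization check is needed.

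For the inversion \eqref{eq:mg-nfbm-inv}, I would iterate \eqref{eq:diff-nfbm} exactly $n-1$ times; using the same Fubini computation at each step, or equivalently $\partial_t k_H^{(n)}=k_{H-1}^{(n-1)}$ with $k_H^{(n)}(t,t)=0$, one gets $\frac{\d^{n-1}}{\d t^{n-1}}B_H^{(n)}(t)=\int_0^t k_{H-n+1}(t,u)\,\d W(u)$, which is the Molchan--Golosov representation of a standard fractional Brownian motion $B_{H-n+1}$ with index $H-n+1\in(0,1)$ driven by the \emph{same} $W$. Applying the invertibility part of Proposition \ref{pro:fbm_MG} to this process recovers $W$ and yields exactly \eqref{eq:mg-nfbm-inv}. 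For the filtrations, the Volterra (lower-triangular) structure of $k_H^{(n)}$ gives $\F_{B_H^{(n)}}(t)\subseteq\F_W(t)$, since each $B_H^{(n)}(s)$, $s\le t$, is a Wiener integral of $W$ over $[0,s]$; conversely, because the paths of $B_H^{(n)}$ are $C^{n-1}$, the left derivatives $\frac{\d^{n-1}}{\d u^{n-1}}B_H^{(n)}(u)$ are $\F_{B_H^{(n)}}(u)$-measurable, so the Volterra integral in \eqref{eq:mg-nfbm-inv} shows $W(t)$ is $\F_{B_H^{(n)}}(t)$-measurable, giving the reverse inclusion.

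The main obstacle I anticipate is the rigorous justification of the stochastic Fubini exchange and of the termwise differentiation, both of which hinge on establishing that the iterated kernels $k_H^{(n)}(t,\cdot)$ and $k_{H-1}^{(n-1)}(s,\cdot)$ lie in $L^2$ uniformly enough on the triangle $\{0\le u\le s\le t\}$; away from the base case this regularity is gained from the smoothing effect of the $s$-integration, but the estimates near the diagonal $u=s$ and near $u=0$, where the base kernel $k_{H-n+1}$ is singular, require care. A secondary point needing attention is the measurability of the pathwise derivatives used in the inversion, namely confirming that differentiation does not enlarge the filtration.
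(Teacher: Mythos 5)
Your proposal is correct and follows essentially the same route as the paper: induction on $n$, the stochastic Fubini computation reducing $\int_0^t k_H^{(n)}(t,u)\,\d W(u)$ to $\int_0^t B_{H-1}^{(n-1)}(s)\,\d s$, and inversion plus equality of filtrations via the $n=1$ Molchan--Golosov case applied to the $(n-1)$st derivative. The only (minor) difference is that you explicitly prove the integration lemma --- that integrating a centred $(H-1)$-self-similar, $(n-1)$-stationary process yields an $H$-self-similar, $n$-stationary one --- where the paper simply invokes \eqref{eq:diff-nfbm} together with $B_H^{(n)}(0)=0$, so your write-up is if anything slightly more self-contained.
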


\begin{proof}
The proof is by induction. 

The case $n=1$ corresponds to the case of the fractional Brownian motion and follows from Proposition \ref{pro:fbm_MG}. 

Assume now that the claim is valid for some $k=n-1$. For $k=n$ we can, by square integrability of the kernels $k_H^{(n)}$, apply stochastic Fubini theorem to have
\begin{eqnarray*}
B^{(n)}_H(t) &=& \int_0^t k_H^{(n)}(t,u)\, \d W(u)\\
&=& \int_0^t\left[\int_u^t k_{H-1}^{(n-1)}(s,u)\, \d s\right]\,\d W(u)\\
&=& \int_0^t\left[\int_0^s k_{H-1}^{(n-1)}(s,u)\, \d W(u)\right]\,\d s\\
&=& \int_0^t B_{H-1}^{(n-1)}(s)\, \d s.
\end{eqnarray*}
Thus, \eqref{eq:mg-nfbm} defines the $n$th order fractional Brownian motion by \eqref{eq:diff-nfbm} together with the fact that $B_{H}^{(n)}(0)=0$. 
Furthermore, the claim \eqref{eq:mg-nfbm-inv} follows directly from \eqref{eq:mg-nfbm} together with \eqref{eq:diff-nfbm}. Finally, the equivalence of filtrations follows from Proposition \ref{pro:fbm_MG} together with the observation that as $B_{H}^{(n)}(0)=0$ for all $H$ and $n\geq 1$, the filtrations of $B_{H-1}^{(n-1)}$ and $B_H^{(n)}$ coincide by \eqref{eq:diff-nfbm}.
\end{proof}

\begin{rem}\label{rem:n-smooth}
As $n$ increases, so does the smoothness of the paths of $B_H^{(n)}$.  Since $W$ is not smooth, the representation \eqref{eq:mg-nfbm} implies that the kernels $k^{(n)}_H(t,u)$ have to become increasingly smooth in $t$ as $n$ increases.  This is also obvious from \eqref{eq:kernel_rec-1}--\eqref{eq:kernel_rec-2}.
\end{rem}

\begin{rem}
Let us consider writing the inversion formula \eqref{eq:mg-nfbm-inv} as
\begin{equation}\label{eq:mg-nfbm-inv-bis}
W(t) = \int_0^t k^{(-n)}_H(t,u)\, \d B_H^{(n)}(u).
\end{equation}
Since $B_H^{(n)}$ is smooth for $n>1$, and $W$ is not, the kernel $k^{(-n)}_H(t,u)$ in \eqref{eq:mg-nfbm-inv-bis} must be non-smooth in $t$.  Actually it is a Schwarz kernel, i.e., a proper distribution.  For example, if $n=2$, then we have 
$$
k_H^{(-2)}(t,u) = k^{(-1)}_{H-1}(t,u)\delta(t-u) - \frac{\p k_{H-1}^{(-1)}}{\p u}(t,u),
$$
where $\delta$ is the Dirac's delta function at point $0$, and the partial derivative $\frac{\p k_{H-1}^{(-1)}}{\p u}(t,u)$ has to be understood in the sense of distributions.
\end{rem}


With the help of Theorem \ref{thm:nfbm_MG} we can provide the transfer principle for the $n$th order fractional Brownian motion. In what follows, the Wiener integral with respect to the $n$th order fractional Brownian motion is defined in the spirit of Definition \ref{dfn:isonormal} as follows.  Define a Hilbert space $\H_H^{(n)}=\H_H^{(n)}([0,T])$ such that:
\begin{enumerate}
\item indicators $\1_t := \1_{[0,t)}$, $t\le T$, belong to $\H_H^{(n)}$.
\item $\H_H^{(n)}$ is endowed with the inner product $\la \1_t,\1_s\ra_{\H_H^{(n)}} := r_H^{(n)}(t,s)$,
where the covariance $r_H^{(n)}$ is given by \eqref{eq:cov}.
\end{enumerate}
Then $B_H^{(n)}= ( B_H^{(n)}(t), t\in [0,T])$ is the isonormal process associated with the Hilbert space $\H_H^{(n)} = \H_H^{(n)}([0,T])$ and 
the Wiener integral 
$$
\int_0^T f(s) \,\d B^{(n)}_H(s) = B_H^{(n)}(f)
$$
of $f \in \H_H^{(n)}$ with respect to the $n$th order fractional Brownian motion $B_H^{(n)}$ is a centred Gaussian random variable, and for $f,g \in \H_H^{(n)}$ we have
$$
\E \left[\int_0^T f(s) \,\d B^{(n)}_H(s)\int_0^T g(s) \,\d B^{(n)}_H(s)\right] = \la f,g\ra_{\H_H^{(n)}}.
$$

The following provides a transfer principle for the $n$th order fractional Brownian motion, in the spirit of Theorem \ref{thm:fbm_transfer}.

\begin{thm}[Transfer principle]\label{thm:nfbm_transfer}
Let $B_H^{(n)}$ be the $n$th order fractional Brownian motion on $[0,T]$ with $n\geq 1$ and $H\in(n-1,n)$. Define an operator 
$
k^{(n)*} : \H_H^{(n)} \mapsto L^2([0,T])
$ by linearly extending
$$
k_H^{(n)*} \1_t = k^{(n)}_H(t,\cdot).
$$
Then $k_H^{(n)*}$ provides an isometry between $\H_H^{(n)}$ and $L^2([0,T])$. Moreover, for any $f \in \H_H^{(n)}$ we have
$$
\int_0^T f(u)\, \d B^{(n)}_H(u) = \int_0^T \left(k^{(n)*}_H f\right) (u)\, \d W(u).
$$
Furthermore, if $n\geq 2$, then $L^2([0,T]) \subset \H_H^{(n)}$, and for any $f \in L^2([0,T])$ we have
\begin{equation}
\label{eq:nfbm_L2}
\left(k^{(n)*}_H f\right)(u) = \int_0^T f(t)k^{(n-1)}_{H-1}(t,u)\,\d t.
\end{equation}
\end{thm}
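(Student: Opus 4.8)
The plan is to mirror the proof of the first-order transfer principle (Theorem \ref{thm:fbm_transfer}), using the Molchan--Golosov representation of Theorem \ref{thm:nfbm_MG} as the essential input, and to treat the isometry, the transfer identity, and the $n\ge 2$ refinement in turn. I would first establish everything on the generating set of indicators and then pass to the completion. Applying the Wiener isometry for $W$ to the representation \eqref{eq:mg-nfbm} gives, for $s,t\le T$,
\begin{align*}
\la\1_t,\1_s\ra_{\H_H^{(n)}}
&= r_H^{(n)}(t,s)
= \E\!\left[\int_0^t k_H^{(n)}(t,u)\,\d W(u)\int_0^s k_H^{(n)}(s,u)\,\d W(u)\right] \\
&= \la k_H^{(n)}(t,\cdot),k_H^{(n)}(s,\cdot)\ra_{L^2([0,T])},
\end{align*}
which is precisely $\la k_H^{(n)*}\1_t,k_H^{(n)*}\1_s\ra_{L^2}$. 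Thus $k_H^{(n)*}$ preserves the inner product on the linear span of the indicators, and since those generate $\H_H^{(n)}$ by definition it extends to a linear isometry on all of $\H_H^{(n)}$. The transfer identity holds trivially on indicators, since $\int_0^T\1_t\,\d B_H^{(n)}=B_H^{(n)}(t)=\int_0^T(k_H^{(n)*}\1_t)\,\d W$ by \eqref{eq:mg-nfbm}; both sides are continuous in $f$ (the left by definition of the Wiener integral, the right by the isometry just established), so the identity extends to all $f\in\H_H^{(n)}$ by density.

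For surjectivity of the isometry onto $L^2([0,T])$ I would use the inversion formula in Theorem \ref{thm:nfbm_MG}, which expresses $W$ in terms of $B_H^{(n)}$ and thereby places each $W(t)$ in the Gaussian space of $B_H^{(n)}$; combined with \eqref{eq:mg-nfbm}, which places each $B_H^{(n)}(t)$ in the Gaussian space of $W$, this shows the two Gaussian spaces coincide. The range of $k_H^{(n)*}$ is closed, being the isometric image of a complete space, so it suffices to see it is dense. But the transfer identity realises the first chaos of $B_H^{(n)}$ as $\{\int_0^T g\,\d W : g\in k_H^{(n)*}(\H_H^{(n)})\}$; since this coincides with the whole first chaos of $W$ and $g\mapsto\int_0^T g\,\d W$ is injective on $L^2$, every $g\in L^2$ already lies in the range, and $k_H^{(n)*}$ is a bijective isometry.

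For the case $n\ge 2$ I would use the recursion \eqref{eq:kernel_rec-2}, which yields $\p_t k_H^{(n)}(t,u)=k_{H-1}^{(n-1)}(t,u)$ and the boundary value $k_H^{(n)}(t,t)=0$, together with the identity $B_H^{(n)}(t)=\int_0^t B_{H-1}^{(n-1)}(s)\,\d s$ from the proof of Theorem \ref{thm:nfbm_MG}. For $f\in L^2([0,T])$ the latter permits the pathwise evaluation $\int_0^T f(t)\,\d B_H^{(n)}(t)=\int_0^T f(t)B_{H-1}^{(n-1)}(t)\,\d t$, since $B_{H-1}^{(n-1)}$ has continuous paths; a stochastic Fubini argument then rewrites this as $\int_0^T\left(\int_u^T f(t)k_{H-1}^{(n-1)}(t,u)\,\d t\right)\d W(u)$, which against the already-proven transfer identity forces $(k_H^{(n)*}f)(u)=\int_u^T f(t)k_{H-1}^{(n-1)}(t,u)\,\d t$, that is \eqref{eq:nfbm_L2} (the lower limit may be taken as $0$ since $k_{H-1}^{(n-1)}(t,u)=0$ for $t<u$). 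The same computation shows $\|f\|_{\H_H^{(n)}}^2=\int_0^T\int_0^T f(t)f(s)r_{H-1}^{(n-1)}(t,s)\,\d t\,\d s$, which is finite for every $f\in L^2([0,T])$ because $r_{H-1}^{(n-1)}$ is continuous and bounded on $[0,T]^2$, and this simultaneously delivers the inclusion $L^2([0,T])\subset\H_H^{(n)}$.

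The two technical points I would watch are the justification of the stochastic Fubini step, which needs the joint square-integrability of the kernel guaranteed by the smoothing in \eqref{eq:kernel_rec-2}, and the boundedness of $r_{H-1}^{(n-1)}$ near the origin, where the cross terms $(t/s)^j|s|^{2(H-1)}$ in \eqref{eq:cov} must be checked to stay bounded; for $n\ge 2$ the relevant exponents $2(H-1)-j$ are nonnegative, so no singularity arises. The main conceptual obstacle, however, is organising the argument so that the explicit formula \eqref{eq:nfbm_L2}, proved first on step functions via Fubini, is correctly transported to all of $L^2([0,T])$ through the completion $\H_H^{(n)}$, bearing in mind that this completion may in principle contain distributions rather than functions.
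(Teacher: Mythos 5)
Your proposal is correct and follows essentially the same route as the paper: the isometry and the transfer identity are established on indicators via the representation \eqref{eq:mg-nfbm} and extended by density, and the $n\ge 2$ statement is obtained from the recursion \eqref{eq:kernel_rec-2} together with a Cauchy--Schwarz-type bound that yields the inclusion $L^2([0,T])\subset\H_H^{(n)}$. The only notable variations are your explicit surjectivity argument via the coincidence of the first chaoses of $W$ and $B_H^{(n)}$ (which the paper leaves implicit) and your derivation of \eqref{eq:nfbm_L2} through the pathwise identity $\d B_H^{(n)}(t)=B_{H-1}^{(n-1)}(t)\,\d t$ rather than the paper's direct computation with $\partial_t k_H^{(n)}(t,u)=k_{H-1}^{(n-1)}(t,u)$ on elementary functions; both rest on the same recursion and you correctly flag the one point requiring care, namely that the formula must be proved on step functions first and then transported through the completion.
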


\begin{proof}
The first part follows from the similar arguments as in the general case (see \cite{fredholm}). For the reader's convenience we present the main arguments. 

Assume first that $f$ is an elementary function of form
$$
f(t) = \sum_{k=1}^{(n)} a_k \textbf{1}_{A_k}
$$
for some disjoint intervals $A_k=(t_{k-1},t_k]$. Then the claim follows by the very definition of the operator $k_H^{(n)*}$ and Wiener integral with respect to $B_H^{(n)}$ together with representation \eqref{eq:mg-nfbm}, and this shows that $k_H^{(n)*}$ provides an isometry between $\H_H^{(n)}$ and $L^2([0,T])$. Hence $\H_H^{(n)}$ can be viewed as a closure of elementary functions with respect to $\| f\|_{\H_H^{(n)}} = \| k_H^{(n)*}f\|_{L^2([0,T])}$ which proves the claim. 

In order to complete the proof, we need to verify \eqref{eq:nfbm_L2}. For this, assume again that $f$ is an elementary function. Then it is straightforward to check that (see also \cite[Example 4.1]{fredholm})
$$
\left(k^{(n)*}_H f\right)(u) = \int_0^T f(t)\, \frac{\partial k^{(n)}_{H}(t,u)}{\partial t}\, \d t,
$$
and thus, thanks to \eqref{eq:kernel_rec-1}--\eqref{eq:kernel_rec-2}, we have \eqref{eq:nfbm_L2} for all elementary $f$. Let now $f\in L^2([0,T])$
and take a sequence $f_j$ of elementary functions such that $\Vert f_j - f\Vert_{L^2([0,T])}$. Then $(f_j, j\in\mathbb{N})$ 
is a Cauchy sequence in $L^2([0,T])$. We have
\begin{equation*}
\left(k^{(n)*}_H f_j\right)(u) = \int_0^T f_j(t)k^{(n-1)}_{H-1}(t,u)\,\d t,
\end{equation*}
and thus
\begin{eqnarray*}
\Vert f_j\Vert_{\H_H^{(n)}}^2 &=& \Vert k^{(n)*}_H f_j\Vert_{L^2}^2\\
&=& \int_0^T \left(\int_0^T f_j(t)k^{(n-1)}_{H-1}(t,u)\,\d t\right)^2 \,\d u\\
&=& \int_0^T \int_0^T \left(k^{(n-1)}_{H-1}(t,u)\right)^2 \,\d t\,\d u \Vert f_j\Vert_{L^2([0,T])}^2\\
&\leq & C_{T,H,n}\Vert f_j\Vert_{L^2([0,T])}^2.
\end{eqnarray*}
The same argument applied to $f_j - f_i$ shows that $(f_j,j\in\mathbb{N})$ is also a Cauchy sequence in $\H_H^{(n)}$. Thus $L^2([0,T]) \subset \H_H^{(n)}$ and \eqref{eq:nfbm_L2} holds.
\end{proof}

\begin{rem}
We stress that for $n=1$ the equation \eqref{eq:nfbm_L2} does not hold. On the other hand, in this case we also have $L^2([0,T]) \subset \H^{(1)}_H$ for $H>\frac12$, while for $H<\frac12$ we have $\H^{(1)}_H \subset L^2([0,T])$. Finally, we also remark that, by the characterisation of the space $\H^{(1)}_H$ in \cite{Pipiras-Taqqu-2001} one can characterise the space $\H^{(1)}_H$ simply by using Fubini's theorem and \eqref{eq:kernel_rec-1}--\eqref{eq:kernel_rec-2}.
\end{rem}
\begin{rem}
The transfer principle provided in Theorem \ref{thm:nfbm_transfer} extends in a straightforward manner to multiple Wiener integrals. Moreover, the transfer principle can be used as a simple approach to stochastic analysis and Malliavin calculus with respect to the $n$th order fractional Brownian motion. For the details on the topic, we refer to \cite{fredholm}.
\end{rem}

\section{Applications}
\label{sec:applications}
\subsection{Equivalence in law}
We first investigate the equivalence of law problem.  For the treatment of the problem in the classical fractional Brownian motion or sheet case, see \cite{Sottinen-2004,Sottinen-Tudor-2006}.

Gaussian processes are either singular or equivalent in law.  By Hitsuda's representation theorem \cite{Hitsuda-1968} a Gaussian process $\tilde W =(\tilde W(t), t\in [0,T])$, is equivalent in law to a Brownian motion $W=(W(t),t\in [0,T])$ if and only if there there exists a Volterra kernel $b\in L^2([0,T]^2)$ and a function $a\in L^2([0,T])$ such that  
\begin{equation}\label{eq:equiv-bm}
\tilde W(t) =  
W(t)  - \int_0^t\int_0^s b(s,u) \d W(u) \d s + \int_0^t a(s)\, \d s.
\end{equation}
Here the Brownian motion $W$ is constructed from $\tilde W$ as
$$
W(t) = 
\tilde W(t) - \int_0^t \int_0^s b^*(s,u)( \d\tilde W(u) - a(u)\d u) \d s,
- \int_0^t a(s)\, \d s.
$$
where $b^*$ is the unique resolvent Volterra kernel of $b$ solving the equation
$$
\int_s^t b^*(t,u)b(u,s)\d u
=
b^*(t,s) + b(t,s)
=
\int_s^t b(t,u)b^*(u,s)\d u.
$$ 
The resolvent kernel can be constructed by using Neumann series, see \cite{Smithies-1958} for details.

The log-likelihood ratio of model $\tilde W$ over $W$ is
\begin{eqnarray}
\ell(t) 
&=& \log \frac{\d\tilde\P}{\d\P}\Big|\F_t  \nonumber \\
&=&
\int_0^t\left[\int_0^s b(s,u)\, \d W(u) + a(s)\right]\d W(s)  \nonumber \\
& &  -\frac12\int_0^t \left[\int_0^sb(s,u)\, \d W(u) + a(s)\right]^2\, \d s.
\label{eq:lr}
\end{eqnarray}

Consider then a Gaussian process $\tilde B_H^{(n)} = (\tilde B_H^{(n)}(t), t\in [0,T])$.  This process is equivalent to the $n$th order fractional Brownian motion if and only if the process
\begin{equation}\label{eq:mg-nfbm-inv-tilde}
\tilde W(t)
=
\int_0^t k^{(-1)}_{H-n+1}(t,u)\, \d\, \frac{\d^{n-1}}{\d u^{n-1}}\tilde B_H^{(n)}(u).
\end{equation}
is equivalent to a Brownian motion.  From \eqref{eq:equiv-bm} and \eqref{eq:mg-nfbm} it follows that
\begin{eqnarray}
\lefteqn{\tilde B_H^{(n)}(t)} \nonumber \\ &=& 
\int_0^t k_H^{(n)}(t,s)\d\tilde W(s) \nonumber \\
&=&
B_H^{(n)}(t) - \int_0^t k_H^{(n)}(t,s)\int_0^s b(s,u)\,\d W(u)\, \d s
+ \int_0^t k_H^{(n)}(t,s) a(s)\, \d s. \label{eq:tilde-nfbm}
\end{eqnarray}

Let us collect the discussion above as a theorem:

\begin{thm}
A Gaussian process $\tilde B_H^{(n)}$ is equivalent in law to an $n$th order fractional Brownian motion on $[0,T]$ if and only if there exists a Volterra kernel $b\in L^2([0,T]^2)$ and a function $a\in L^2([0,T])$ such that $\tilde B_H^{(n)}$ admits the representation \eqref{eq:tilde-nfbm}, where $\tilde W$ is constructed from $\tilde B_H^{(n)}$ by \eqref{eq:mg-nfbm-inv-tilde} and $W$ is a Brownian motion connected to $\tilde W$ via \eqref{eq:equiv-bm}.  The log-likelihood ratio of $\tilde B_H^{(n)}$ over $B_H^{(n)}$ is given by \eqref{eq:lr}.
\end{thm}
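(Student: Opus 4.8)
The plan is to reduce the equivalence-in-law problem for the $n$th order fractional Brownian motion to the corresponding, classical problem for the Brownian motion, exploiting that the transfer principle of Theorem \ref{thm:nfbm_MG} sets up an \emph{invertible} and \emph{deterministic} correspondence between $B_H^{(n)}$ and its driving Brownian motion. Concretely, let $\Phi$ denote the map sending a path of $B_H^{(n)}$ to the path $W(t) = \int_0^t k^{(-1)}_{H-n+1}(t,u)\,\d\,\frac{\d^{n-1}}{\d u^{n-1}}B_H^{(n)}(u)$ prescribed by \eqref{eq:mg-nfbm-inv}, with inverse $\Phi^{-1}$ given by the forward representation \eqref{eq:mg-nfbm}. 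By Theorem \ref{thm:nfbm_MG} these are mutually inverse measurable maps between the path spaces, and the filtration identity $\F_W(t)=\F_{B_H^{(n)}}(t)$ makes them adapted. The first thing I would record is that, since equivalence in law forces $\tilde B_H^{(n)}$ to share the almost-sure path regularity of $B_H^{(n)}$, the $(n-1)$th derivative appearing in \eqref{eq:mg-nfbm-inv-tilde} exists on the support of the equivalent measure, so $\tilde W$ is well defined.

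For the necessity direction I would argue as follows. Assume $\tilde B_H^{(n)}$ is equivalent in law to $B_H^{(n)}$. Applying the deterministic bijection $\Phi$ to both processes transports this equivalence into an equivalence in law between $\tilde W$, defined by \eqref{eq:mg-nfbm-inv-tilde}, and the Brownian motion $W$: indeed $\Phi$ pushes the law of $B_H^{(n)}$ forward to that of $W$ and the law of $\tilde B_H^{(n)}$ forward to that of $\tilde W$, and push-forward under a common measurable bijection preserves mutual absolute continuity. Hitsuda's representation theorem \cite{Hitsuda-1968} then supplies a Volterra kernel $b\in L^2([0,T]^2)$ and a function $a\in L^2([0,T])$ realising $\tilde W$ as in \eqref{eq:equiv-bm}. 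Substituting this expression for $\tilde W$ into the forward representation $\tilde B_H^{(n)}(t) = \int_0^t k_H^{(n)}(t,s)\,\d\tilde W(s)$ and invoking the stochastic Fubini theorem exactly as in the derivation leading to \eqref{eq:tilde-nfbm} produces the claimed representation.

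The sufficiency direction is the same chain of implications read backwards: if $\tilde B_H^{(n)}$ is given by \eqref{eq:tilde-nfbm} with such $b$ and $a$, then the associated $\tilde W$ from \eqref{eq:mg-nfbm-inv-tilde} satisfies \eqref{eq:equiv-bm}, hence is equivalent in law to $W$ by Hitsuda's theorem, and applying $\Phi^{-1}$ transports this equivalence back to $\tilde B_H^{(n)}$ and $B_H^{(n)}$. For the likelihood statement, the point is that $\Phi$ is a deterministic bijection adapted to the common filtration, so the Radon--Nikodym derivative is invariant under it: the ratio $\frac{\d\tilde\P}{\d\P}$ computed for $\tilde B_H^{(n)}$ over $B_H^{(n)}$ coincides, as a random variable, with the one for $\tilde W$ over $W$, which is exactly \eqref{eq:lr}.

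I expect that the only genuine subtlety, as opposed to routine bookkeeping, is the rigorous justification that the transfer map preserves both equivalence of measures and the likelihood ratio. Measurability and bijectivity of $\Phi$ are provided by Theorem \ref{thm:nfbm_MG}, but one must be careful that the inversion \eqref{eq:mg-nfbm-inv-tilde}, built from the distributional kernel of \eqref{eq:mg-nfbm-inv-bis}, is well posed on the support of the equivalent law; this is precisely where the shared path regularity granted by equivalence in law is indispensable, and it is the step I would treat with the most care.
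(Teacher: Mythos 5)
Your proposal is correct and follows essentially the same route as the paper: reduce the problem to the Brownian case via the invertible representation of Theorem \ref{thm:nfbm_MG}, invoke Hitsuda's theorem to obtain \eqref{eq:equiv-bm}, and substitute into the forward representation to get \eqref{eq:tilde-nfbm}. Your additional remarks on why the deterministic adapted bijection preserves mutual absolute continuity and the likelihood ratio, and on the well-posedness of the inversion on the support of the equivalent law, merely make explicit steps the paper leaves implicit.
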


\begin{proof}
By \eqref{eq:mg-nfbm-inv}, the process $\tilde B_H^{(n)}$ is equivalent to the $n$th order fractional Brownian motion if and only if the process 
\begin{equation}
\tilde W(t)
=
\int_0^t k^{(-1)}_{H-n+1}(t,u)\, \d\, \frac{\d^{n-1}}{\d u^{n-1}}\tilde B_H^{(n)}(u).
\end{equation}
is equivalent to a Brownian motion.  From \eqref{eq:equiv-bm} and \eqref{eq:mg-nfbm} it follows that
\begin{eqnarray}
\lefteqn{\tilde B_H^{(n)}(t)} \nonumber \\ &=& 
\int_0^t k_H^{(n)}(t,s)\d\tilde W(s) \nonumber \\
&=&
B_H^{(n)}(t) - \int_0^t k_H^{(n)}(t,s)\int_0^s b(s,u)\,\d W(u)\, \d s
+ \int_0^t k_H^{(n)}(t,s) a(s)\, \d s
\end{eqnarray}
which concludes the proof.
\end{proof}

\begin{rem}\label{rem:drift}
For transformations $\tilde B_H^{(n)}(t) =  B_H^{(n)}(t) + A(t)$ that are equivalent in law to $B_H^{(n)}$ on $[0,T]$ we have
$$
A(t) = \int_0^t k_H^{(n)}(t,s)a(s)\, \d s
$$
for some $a\in L^2([0,T])$.
Consequently, $A$ is $H+\frac12$ times fractionally differentiable with $\frac{\d^j}{\d t^j}A(0)=0$ for all $j<n$.  Otherwise, in principle, the function $A$ can be filtered out with probability one given continuous data on any interval $[0,T]$ with $T>0$.  In particular, it follows that the drift $\alpha$ in signal $X(t)= B_H^{(n)}(t)+ \alpha t$ for $n\ge 2$, can be completely determined from continuous observations on any interval $[0,T]$. Indeed, $\alpha = \frac{\d}{\d t}X(0)$.
\end{rem}

\subsection{Prediction of the $n$the order fractional Brownian motion}
The equivalence of filtrations generated by the Brownian motion and the $n$th order fractional Brownian motion is of uttermost important in prediction. Indeed, this guarantees that the prediction of the $n$th order fractional Brownian motion can be done by using the same approach as taken in \cite{Sottinen-Viitasaari-2017b}.

\begin{thm}
The regular conditional distribution of the $n$th order fractional Brownian motion $B_H^{(n)}=(B_H^{(n)}(t), t\in [0,T])$ conditioned on the information $\F_{B_H^{(n)}}(u) = \sigma\{B_H^{(n)}(v), v\le u\}$, is Gaussian process with random mean $\hat B_H^{(n)}(\cdot|u)$ given by 
\begin{equation}\label{eq:pred-mean}
\hat B_H^{(n)}(t|u) = B_H^{(n)}(u) + \int_0^u \left[ k_H^{(n)}(t,v) - k_H^{(n)}(u,v)\right]\, \d W(v)
\end{equation}
and a deterministic covariance $\hat r_H^{(n)}(\cdot,\cdot|u)$ given by
\begin{equation}\label{eq:pred-cov}
\hat r_H^{(n)}(t,s|u) 
=
r_H^{(n)}(t,s) - \int_{0}^{u} k_H^{(n)}(t,v)k_H^{(n)}(s,v)\, \d v.
\end{equation}
\end{thm}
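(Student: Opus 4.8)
The plan is to reduce everything to the elementary Gaussian conditioning afforded by the representation \eqref{eq:mg-nfbm} together with the coincidence of filtrations established in Theorem \ref{thm:nfbm_MG}. The crucial observation is that since $\F_{B_H^{(n)}}(u) = \F_W(u)$, conditioning on the past of the $n$th order fractional Brownian motion is the same as conditioning on the past of the driving Brownian motion $W$, and the latter is tractable because $W$ has independent increments.

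First I would split the Wiener integral \eqref{eq:mg-nfbm} at the conditioning time $u$, writing for each $t\in[0,T]$
$$
B_H^{(n)}(t) = \int_0^u k_H^{(n)}(t,v)\, \d W(v) + \int_u^t k_H^{(n)}(t,v)\, \d W(v),
$$
with the convention that the second integral vanishes for $t\le u$ since the kernel $k_H^{(n)}(t,\cdot)$ is supported on $[0,t]$. The first term is $\F_W(u)$-measurable, while the second, being a Wiener integral over $[u,t]$, is independent of $\F_W(u)$ by the independence of increments of $W$. Because $(B_H^{(n)}(t))_{t\in[0,T]}$ is a Gaussian process, this orthogonal decomposition immediately yields that the regular conditional distribution given $\F_W(u)$ is again Gaussian, with random mean equal to the $\F_W(u)$-measurable part and deterministic covariance equal to the covariance of the independent future parts.

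For the mean, the measurable part is $\int_0^u k_H^{(n)}(t,v)\, \d W(v)$; rewriting $B_H^{(n)}(u) = \int_0^u k_H^{(n)}(u,v)\, \d W(v)$ and adding and subtracting this term gives the stated formula \eqref{eq:pred-mean}. For the covariance, I would invoke the It\^o isometry in the form $r_H^{(n)}(t,s) = \int_0^{t\wedge s} k_H^{(n)}(t,v) k_H^{(n)}(s,v)\, \d v$, so that the covariance of the future parts equals $\int_u^{t\wedge s} k_H^{(n)}(t,v) k_H^{(n)}(s,v)\, \d v = r_H^{(n)}(t,s) - \int_0^u k_H^{(n)}(t,v) k_H^{(n)}(s,v)\, \d v$, which is precisely \eqref{eq:pred-cov}; a short check exploiting the support of the kernels confirms that this identity persists when $t$ or $s$ lies below $u$, in which case the corresponding marginal is already $\F_W(u)$-measurable and the conditional covariance correctly reduces to zero.

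The main obstacle I anticipate is not the algebra but the rigorous passage from the decomposition of each marginal to a statement about the regular conditional distribution of the \emph{whole} process: one must argue that the conditional law of the Gaussian family is determined by the conditional means and covariances computed above, uniformly in $t,s\in[0,T]$, and that these assemble into a genuine regular conditional probability. This is standard for Gaussian processes and can be dispatched by observing that the centred future field $\bigl(B_H^{(n)}(t) - \int_0^u k_H^{(n)}(t,v)\,\d W(v)\bigr)_{t\in[0,T]}$ is a Gaussian process independent of $\F_W(u)$, so that its unconditional law, translated by the $\F_W(u)$-measurable mean \eqref{eq:pred-mean}, furnishes the desired regular conditional distribution.
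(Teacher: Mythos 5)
Your proposal is correct and follows essentially the same route as the paper: reduce to conditioning on $\F_W(u)$ via the coincidence of filtrations, split the Wiener integral at $u$, use independence of Brownian increments for the mean, and the It\^o isometry identity $r_H^{(n)}(t,s)=\int_0^{t\wedge s}k_H^{(n)}(t,v)k_H^{(n)}(s,v)\,\d v$ for the covariance. The only cosmetic difference is that the paper invokes the Gaussian correlation theorem (Janson) to assert Gaussianity of the conditional law with deterministic covariance, whereas you derive this directly from the orthogonal decomposition into an $\F_W(u)$-measurable part plus an independent Gaussian future field; both are standard and equivalent here.
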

\begin{proof}
By the Gaussian correlation theorem (see Janson \cite{Janson-1997}) the conditional law is Gaussian with mean 
$$
t\mapsto\E\left[ B^{(n)}_H(t)\,\Big|\, \F_{B_H^{(n)}}(u)\right]
$$
and a covariance function
$$
(t,s)\mapsto\hat r_H^{(n)}(t,s|u) = \E\left[\left(B_H^{(n)}(t)-\hat B_H^{(n)}(t|u)\right)\left(B_H^{(n)}(s)-\hat B_H^{(n)}(s|u)\right)
\,\Big|\, \F_{B_H^{(n)}}(u)\right].
$$

We start by proving equation \eqref{eq:pred-mean}. Since the $n$th order fractional Brownian motion $B^{(n)}_H$ admits the representation \eqref{eq:mg-nfbm} and the filtrations of $B^{(n)}_H$ and the Brownian motion $W$ coincide, the prediction mean of $B_H^{(n)}$ given observations $\F_{B_H^{(n)}}(u)$ is the same as the prediction mean under the observations $\F_{W}(u)$:
\begin{eqnarray*}
\hat B_H^{(n)}(t|s) 
&=&
\E\left[ B^{(n)}_H(t)\,\Big|\, \F_{B_H^{(n)}}(u)\right] \\
&=&
\E\left[ B^{(n)}_H(t)\,\Big|\, \F_{W}(u)\right]
\end{eqnarray*}
By using \eqref{eq:mg-nfbm} and the independence of Brownian increments we obtain
\begin{eqnarray*}
\hat B_H^{(n)}(t|u) 
&=&
\E\left[ \int_0^t k_H^{(n)}(t,v)\, \d W(v)\,\Big|\, \F_{W}(u)\right] \\
&=&
\E\left[ \int_0^u k_H^{(n)}(t,v)\, \d W(v)
+ \int_u^t k_H^{(n)}(t,v)\, \d W(v)
\,\Big|\, \F_{W}(u)\right] \\
&=&
\E\left[ \int_0^u k_H^{(n)}(t,v)\, \d W(v)
\,\Big|\, \F_{W}(u)\right] + \E\left[ \int_u^t k_H^{(n)}(t,v)\, \d W(v)
\right] \\
&=&
\int_0^u k_H^{(n)}(t,v)\, \d W(v).
\end{eqnarray*}
This formula can also be written as 
\begin{eqnarray*}
\hat B_H^{(n)}(t|u) 
&=&
\int_0^u k_H^{(n)}(t,v)\, \d W(v) \\
&=&
\int_0^u k_H^{(n)}(u,v) \, \d W(v) 
+  \int_0^u \left[ k_H^{(n)}(t,v) - k_H^{(n)}(u,v)\right]\, \d W(v) \\
&=&
B_H^{(n)}(u) + \int_0^u \left[ k_H^{(n)}(t,v) - k_H^{(n)}(u,v)\right]\, \d W(v),
\end{eqnarray*}
proving \eqref{eq:pred-mean}. 

It remains to prove \eqref{eq:pred-cov}. Proceeding similarly, the conditional covariance can be calculated as 
\begin{eqnarray*}
\lefteqn{\hat r_H^{(n)}(t,s|u)} \\ 
&=&
\E\left[\left(B_H^{(n)}(t)-\hat B_H^{(n)}(t|u)\right)\left(B_H^{(n)}(s)-\hat B_H^{(n)}(s|u)\right)
\,\Big|\, \F_{B_H^{(n)}}(u)\right] \\
&=&
\E\left[\int_u^t k_H^{(n)}(t,v)\, \d W(v)\int_u^s k_H^{(n)}(s,v)\, \d W(v)
\,\Big|\, \F_{W}(u)\right] \\
&=&
\int_{u}^{\min(t,s)} k_H^{(n)}(t,v)k_H^{(n)}(t,v)\, \d v.
\end{eqnarray*}
Since
$$
r_H^{(n)}(t,s) = \int_0^{\min(t,s)} k_H^{(n)}(t,u) k_H^{(n)}(s,u)\, \d u,
$$
formula \eqref{eq:pred-cov} follows from this.  This concludes the proof.
\end{proof}

\begin{rem}
By using a the transfer principle, it is possible to write \eqref{eq:pred-mean} as
$$
\hat B_H^{(n)}(t|u) = B_H^{(n)}(u) + \int_0^u \Psi_H^{(n)}(t,u,v)\, \d B_H^{(n)}(v),
$$
where $\Psi_H^{(n)}(t,u,\cdot)$ is a Schwarz kernel.
\end{rem}

\begin{exa}
The prediction mean for a transformation $f(B_H^{(n)}(t))$ given the observation $\F_{B_H^{(n)}}(u)$ can be calculated as
\begin{eqnarray}
\lefteqn{\E\left[f(B_H^{(n)}(t))\,\big|\, \F_{B_H^{(n)}}(u)\right]} \nonumber \\
&=&
\frac{1}{\sqrt{2\pi}\hat r_H^{(n)}(t,t|u)}\int_{-\infty}^{\infty}
f(x)\exp\left\{-\frac12\frac{\left(x-\hat B_n^H(t|u)\right)^2}{\hat r_H^{(n)}(t,t|u)}\right\}
\, \d x. \label{eq:exa-pred}
\end{eqnarray}
(If the transformation $f$ in injective, then \eqref{eq:exa-pred} is also the prediction under the observations $\sigma\{f(B_H^{(n)}(v)),v \le u\}$.)
More generally, for 
$f(B_H^{(n)}(\t)) = f(B_H^{(n)}(t_1),\ldots,B_H^{(n)}(t_k))$
we have
\begin{eqnarray}
\lefteqn{\E\left[f(B_H^{(n)}(\t))\,\big|\, \F_{B_H^{(n)}}(u)\right]} \nonumber \\
&=&
(2\pi)^{-k/2}\left|\hat r_H^{(n)}(\t|u)\right|^{-1/2}\int_{\R^k} 
f(\mathbf{x}) \nonumber \\
& & 
\exp\left\{-\frac12(\mathbf{x}-\hat B_n^H(\t|u))'\hat r_H^{(n)}(\t|u)^{(-1)}(\mathbf{x}-\hat B_n^H(\t|u))\right\}
\, \d\mathbf{x}, \label{eq:exa-pred2}
\end{eqnarray}
where
\begin{eqnarray*}
\hat r_H^{(n)}(\t|u) &=& \big[\hat r_H^{(n)}(t_i,t_j|u)\big]_{i,j=1}^k, \\
\hat B_H^{(n)}(\t) &=& \big[\hat B_H^{(n)}(t_i)\big]_{i=1}^k.
\end{eqnarray*}
\end{exa}
\begin{rem}
The no-information and full-information asymptotics of the conditional covariance \eqref{eq:pred-cov} can be computed similarly as in \cite[Proposition 3.2 and Proposition 3.3]{Sottinen-Viitasaari-2017b}. Actually, one can simply use 
the result for the standard fractional Brownian motion, \cite[Proposition 3.2 and Proposition 3.3]{Sottinen-Viitasaari-2017b} and then apply \eqref{eq:kernel_rec-1}--\eqref{eq:kernel_rec-2} together with the induction to obtain asymptotic expansions. Indeed, this follows directly from the Fubini's theorem. We leave the details to the reader.
\end{rem}

\section{Simulations}\label{sec:simu}

In this section we illustrate the smoothening effect by simulating the paths of $n$th order fractional Brownian motion for different values of the Hurst index $H$ and for values of $n=1,2,3,4$. In all of the simulations we have used time interval $[0,1]$ with $2^{12} = 4096$ grid points, and the path of the fractional Brownian motion is simulated by using the fast Fourier transform. In figures 1--5 we have generated one sample path of the fractional Brownian motion $B_H$ with different values of $H$, and the corresponding sample paths of $B_H^{(2)}$, $B_H^{(3)}$, and $B_H^{(4)}$, by using \eqref{eq:diff-nfbm}. 

Overall, the smoothening effect is clearly visible from the pictures for all values $H \in \{0.1,0.25,0.5,0.75,0.9\}$. For all different values of $H$, the paths of the processes $B_H^{(3)}$ and $B_H^{(4)}$ look rather smooth. This is clear also from the theoretical point of view, as $B_H^{(3)}$ is already twice continuously differentiable. The differences of the value $H$ can be seen obviously from the paths of $B_H$ itself, but also from the paths of $B_H^{(2)}$. Indeed, the paths of $B_H^{(2)}$ corresponding to values $H=0.1$ and $H=0.25$ in figures 1 and 2 are not clearly smooth. From theoretical point of view, the paths are ''barely'' continuously differentiable, as the derivative is very rough. In contrast, the paths of $B_H^{(2)}$ corresponding to values $H=0.75$ and $H=0.9$ are ''almost'' twice continuously differentiable, as the path of the $B_H$ itself is almost differentiable in the sense that the H\"older index is close to 1 corresponding to the smooth case. This can be seen also from figures 4 and 5. Indeed, comparing $B_{0.1}^{(3)}$ in Figure 1(C) to $B_{0.9}^{(2)}$ in Figure 5(B) the smoothness looks very similar. This is not surprising from theoretical point of view, as the regularity index of $B_{0.1}^{(3)}$ is 2.1 (meaning $B_{0.1}^{(3)}$ is twice continuously differentiable and the second derivative is 0.1-H\"older) while the regularity index of $B_{0.9}^{(2)}$ is 1.9. 

We also stress that in all of the Figures 1--5 it seems that the values seem to get smaller as $n$ increases. This phenomena is again supported by the theory. Indeed, first of all we are integrating over $[0,t]$ for $t\leq 1$ which means that the extrema points gets smaller in absolute value. In addition, the small deviation probabilities gets higher (see \cite{aurzada-2011,aurzada-2013,aurzada-2009}) as $n$ increases. That is, for larger value of $n$ the process is smoother which implies that the process stays in a small $\epsilon$-ball around starting point $B_H^{(n)}(0)=0$ with higher probability. 
\begin{figure}[!htbp]
  \centering
  \begin{tabular}{cc}
  \subfloat[1. order]{\includegraphics[width=0.5\textwidth]{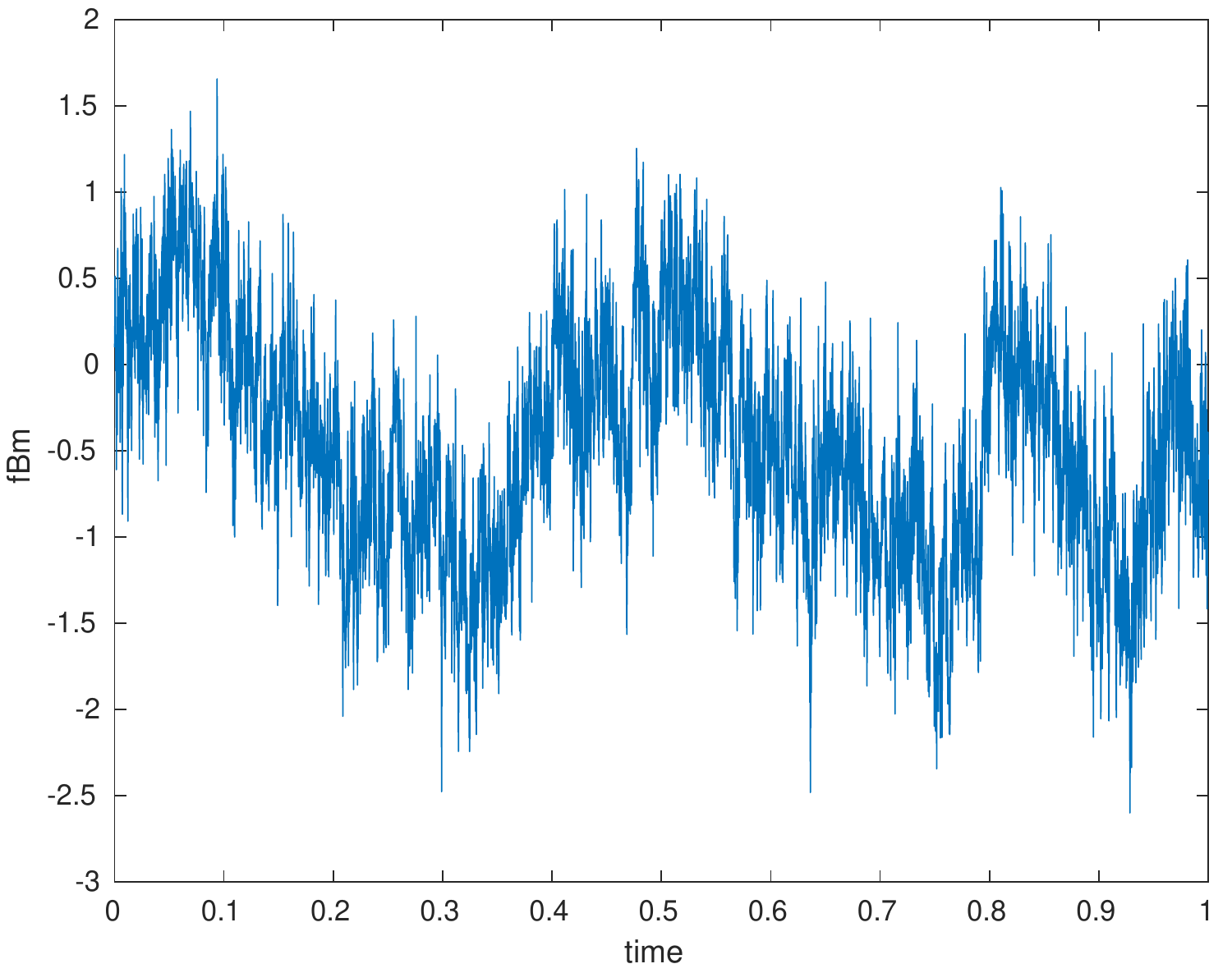}\label{fig:fbm1}}
  \subfloat[2. order]{\includegraphics[width=0.5\textwidth]{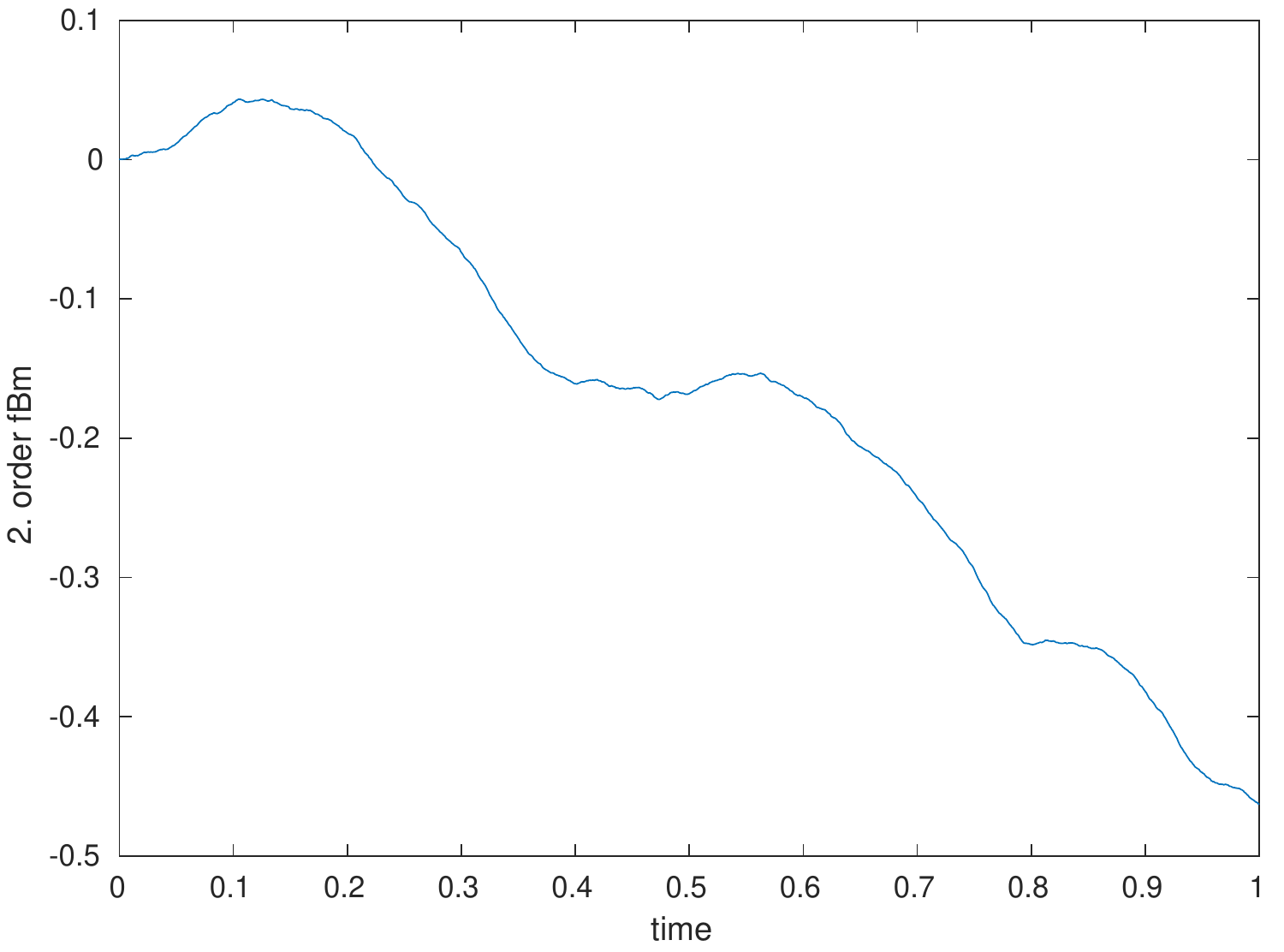}\label{fig:2fbm1}}\\ 
    \subfloat[3. order]{\includegraphics[width=0.5\textwidth]{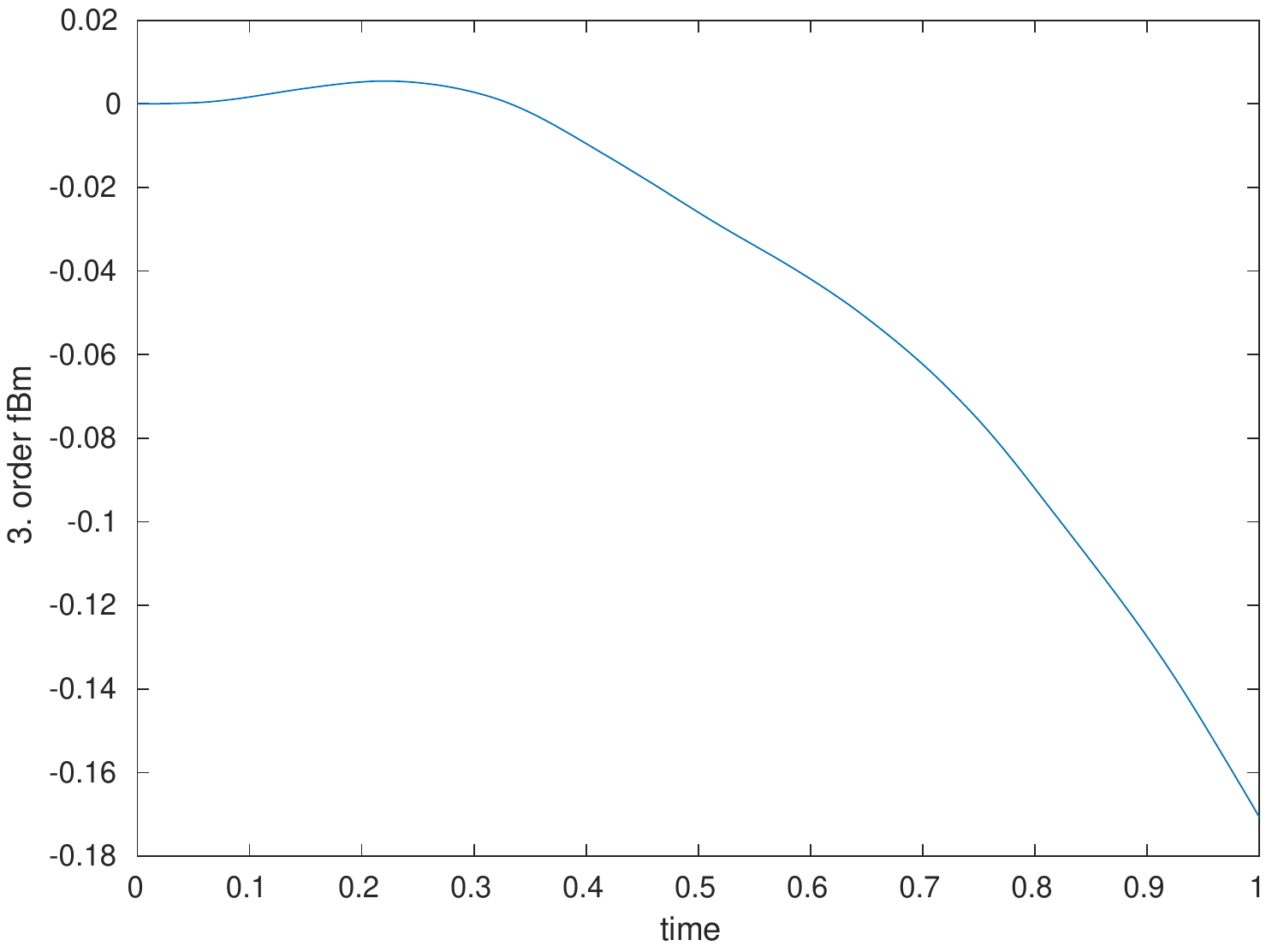}\label{fig:3fbm1}}
  \subfloat[4. order.]{\includegraphics[width=0.5\textwidth]{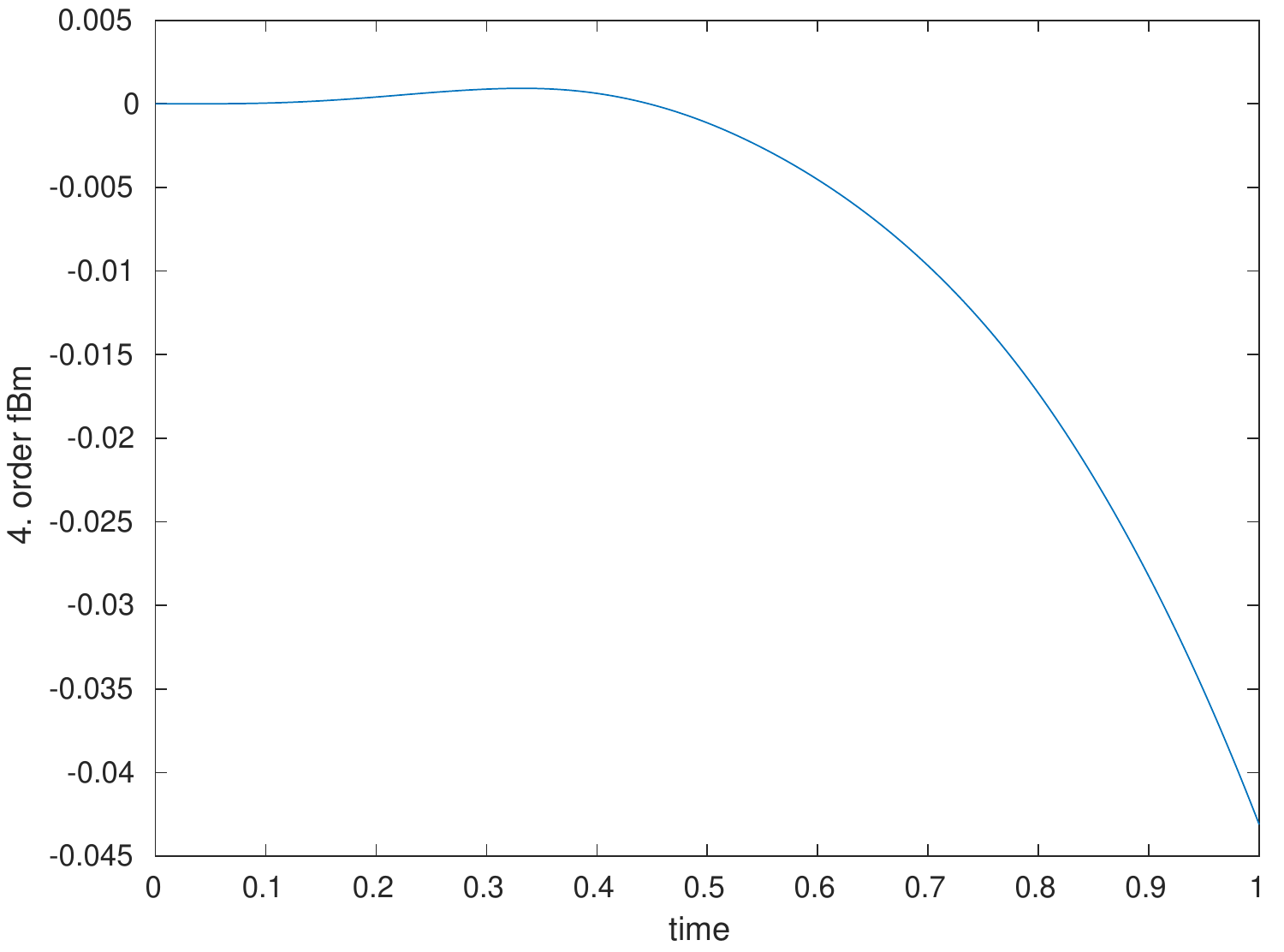}\label{fig:4fbm1}}\\
\end{tabular}
  \caption{1-4. order fractional Brownian motion with $H=0.1$.}
\end{figure}

\begin{figure}[!htbp]
  \centering
  \begin{tabular}{cc}
  \subfloat[1. order]{\includegraphics[width=0.5\textwidth]{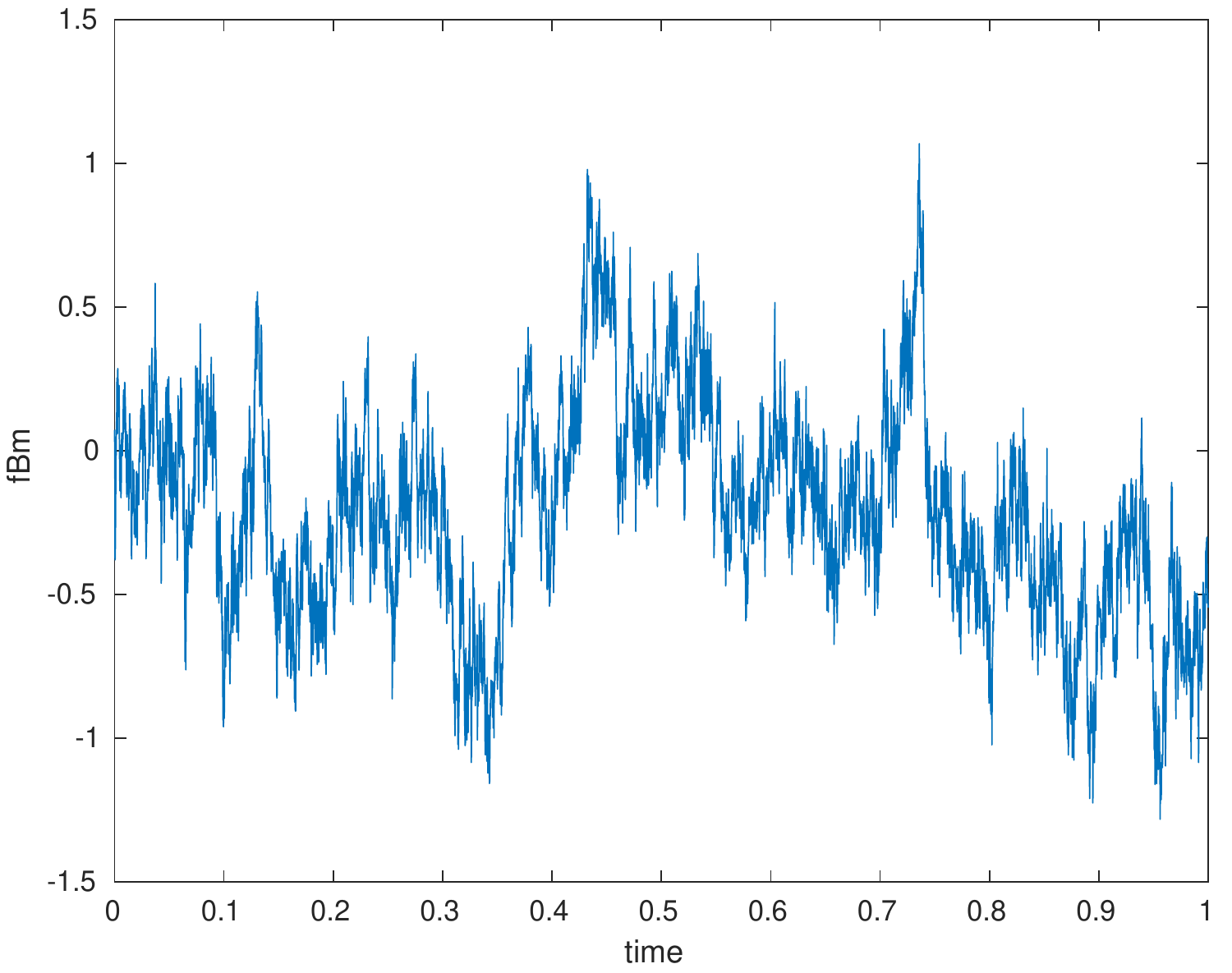}\label{fig:fbm25}}
  \subfloat[2. order]{\includegraphics[width=0.5\textwidth]{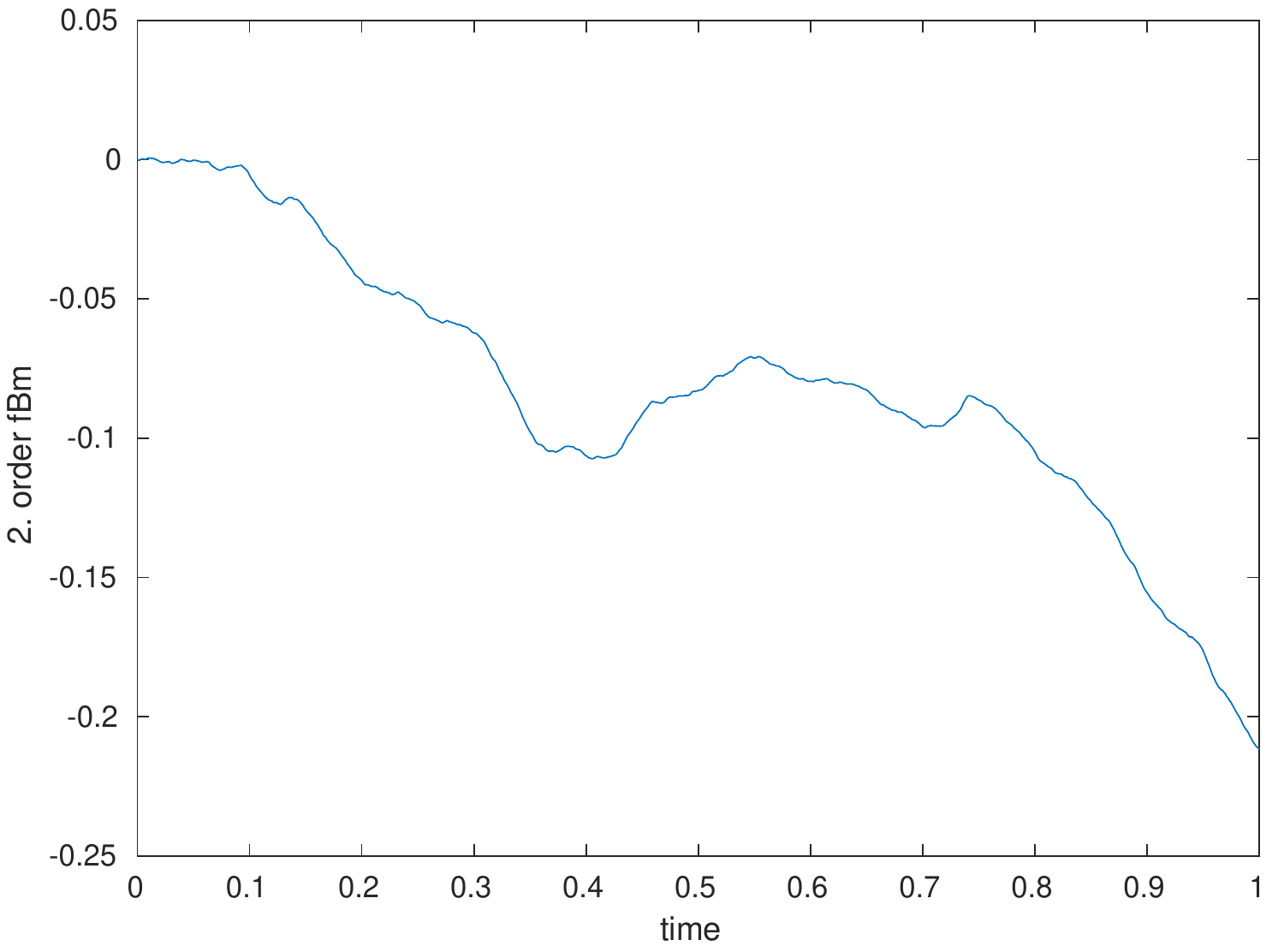}\label{fig:2fbm25}}\\ 
    \subfloat[3. order]{\includegraphics[width=0.5\textwidth]{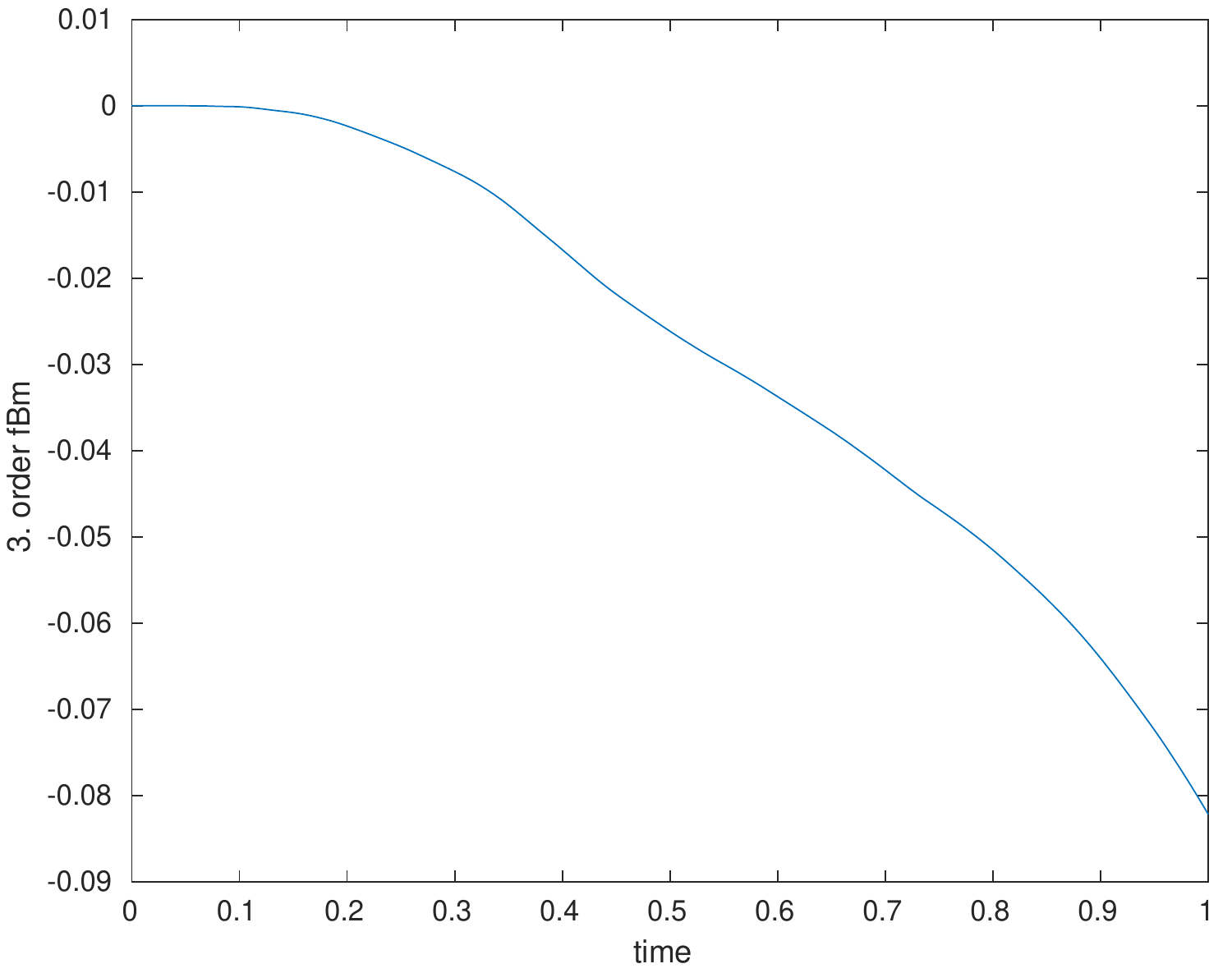}\label{fig:3fbm25}}
  \subfloat[4. order.]{\includegraphics[width=0.5\textwidth]{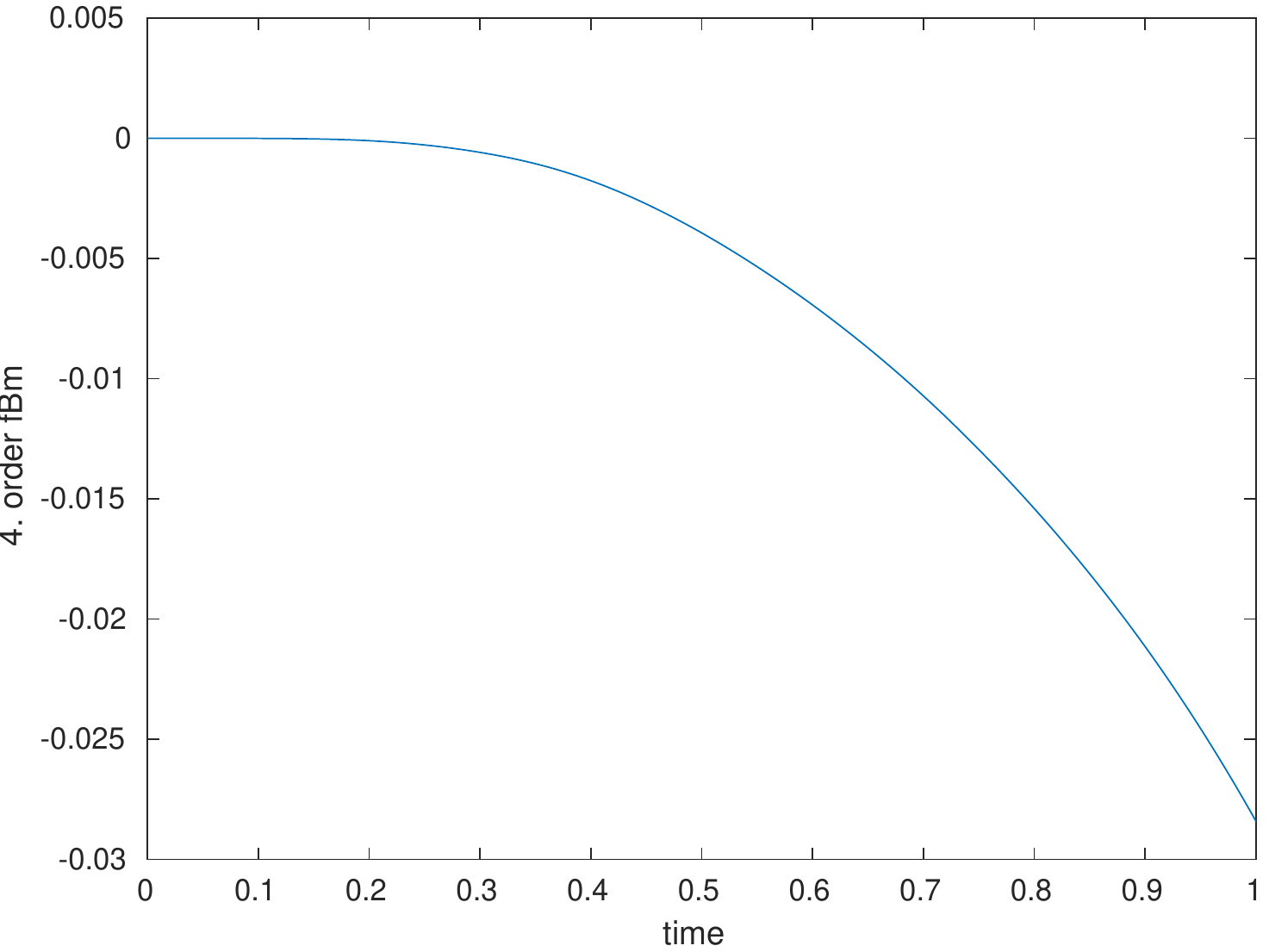}\label{fig:4fbm25}}\\
\end{tabular}
  \caption{1-4. order fractional Brownian motion with $H=0.25$.}
\end{figure}

\begin{figure}[!htbp]
  \centering
  \begin{tabular}{cc}
  \subfloat[1. order]{\includegraphics[width=0.5\textwidth]{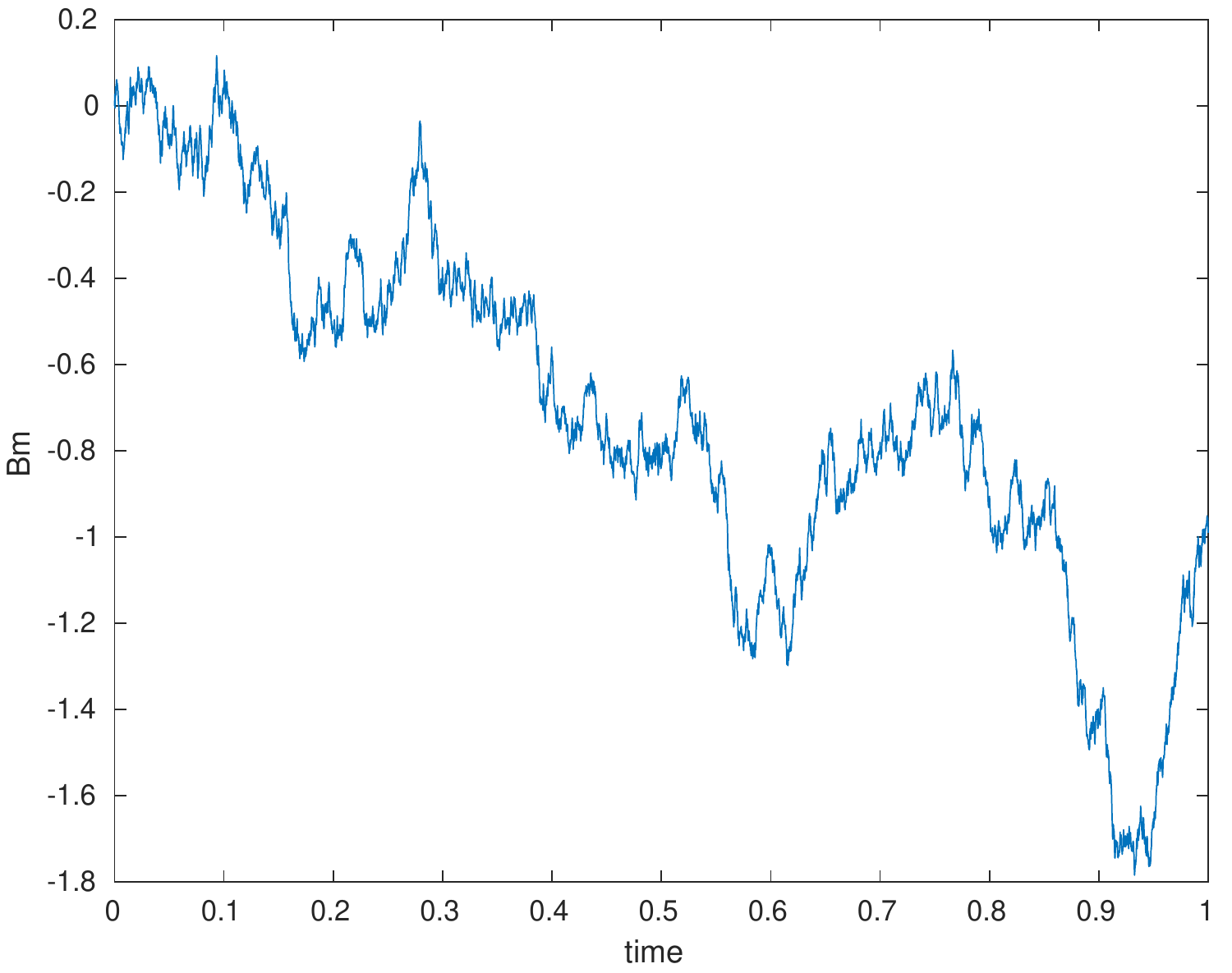}\label{fig:bm}}
  \subfloat[2. order]{\includegraphics[width=0.5\textwidth]{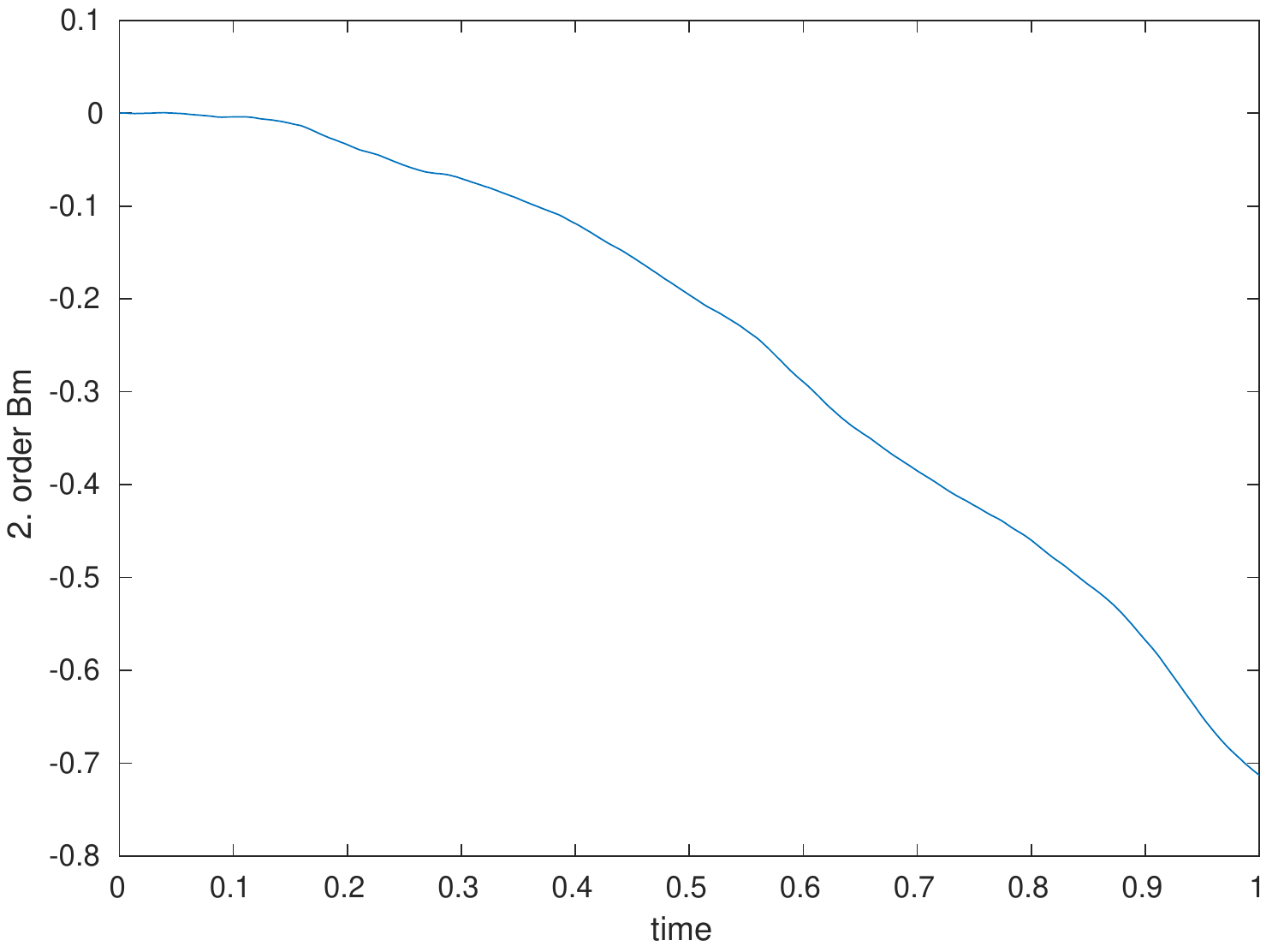}\label{fig:2bm}}\\ 
    \subfloat[3. order]{\includegraphics[width=0.5\textwidth]{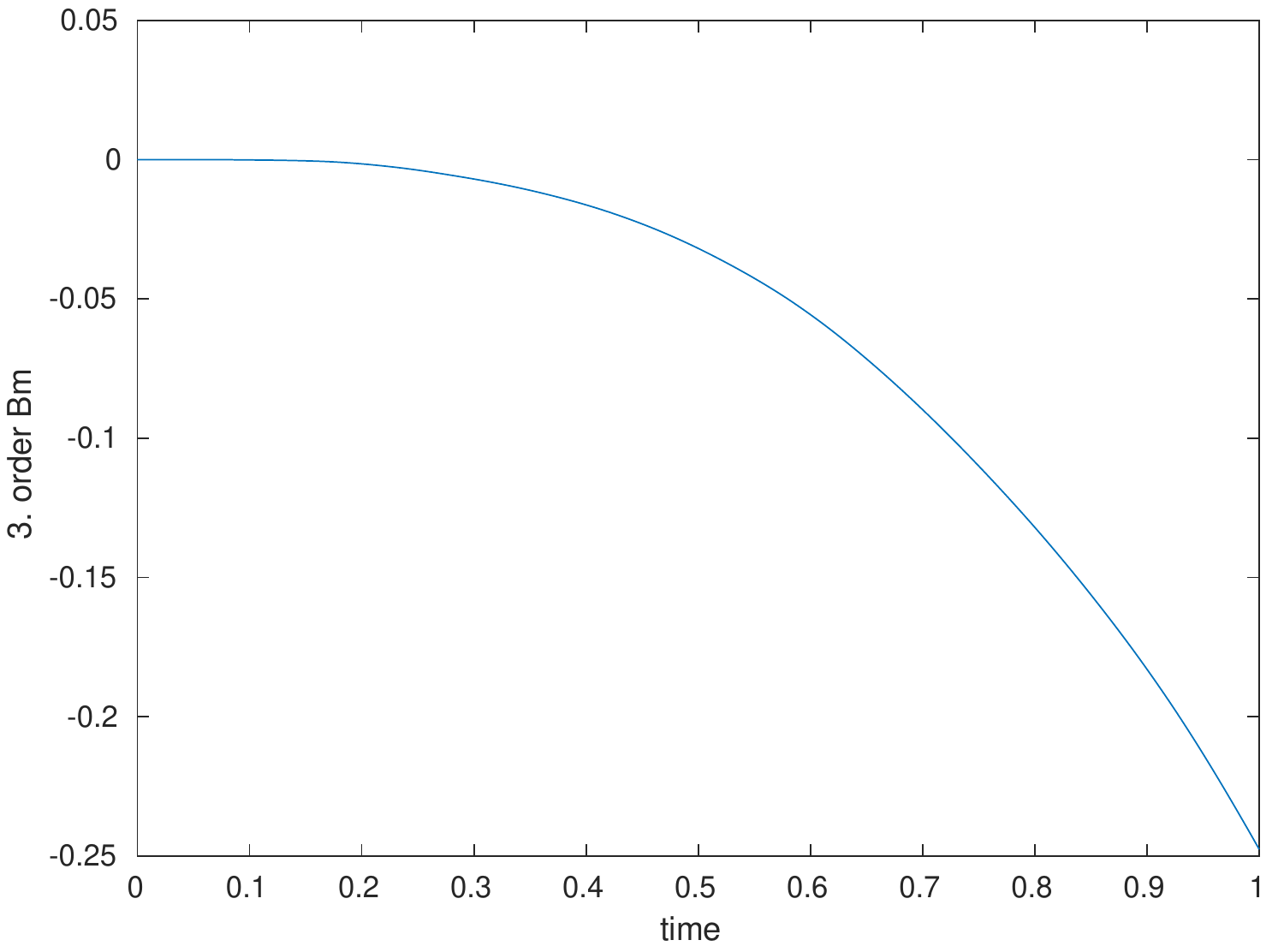}\label{fig:3bm}}
  \subfloat[4. order.]{\includegraphics[width=0.5\textwidth]{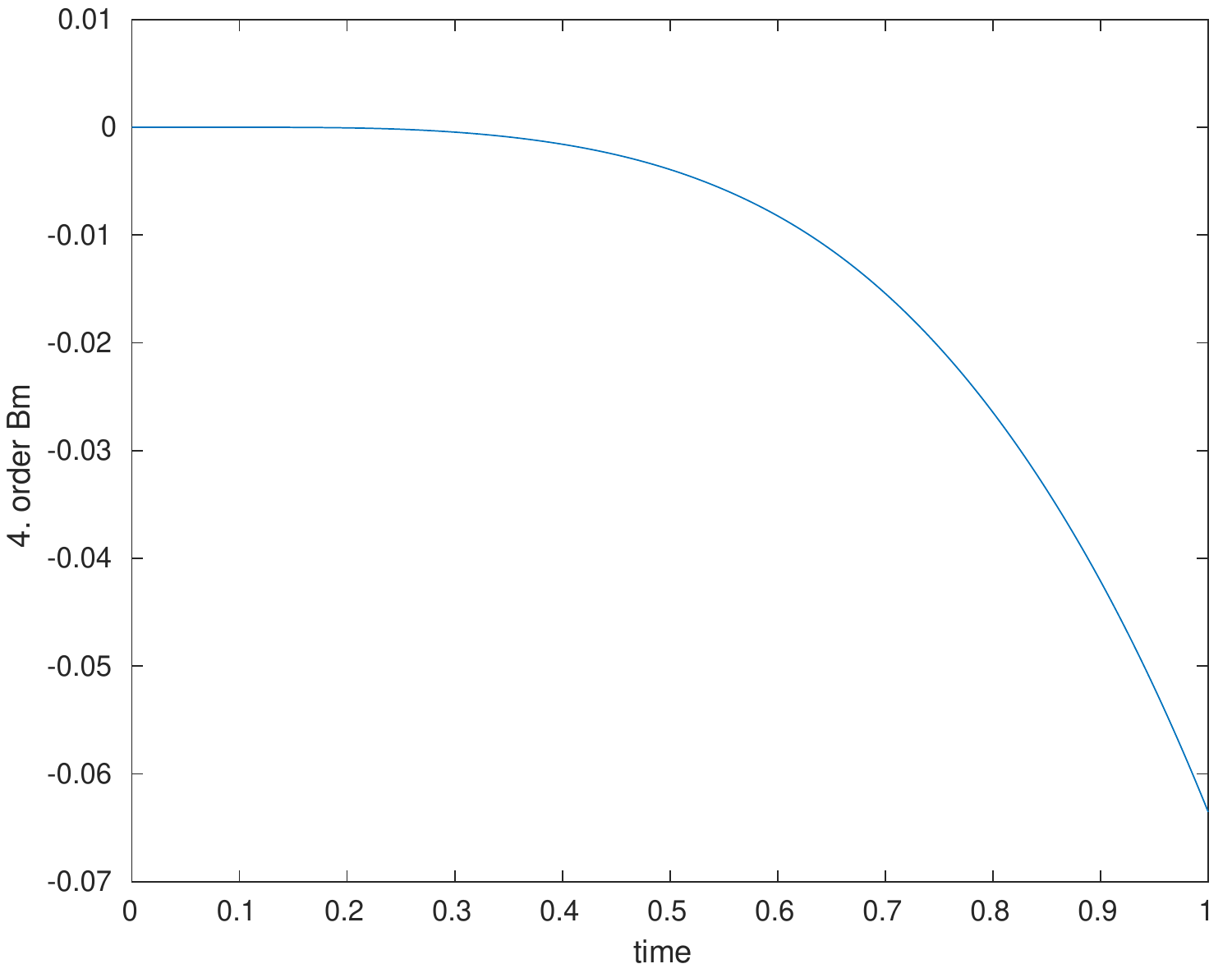}\label{fig:4bm}}\\
\end{tabular}
  \caption{1-4. order standard Brownian motion.}
\end{figure}

\begin{figure}[!htbp]
  \centering
  \begin{tabular}{cc}
  \subfloat[1. order]{\includegraphics[width=0.5\textwidth]{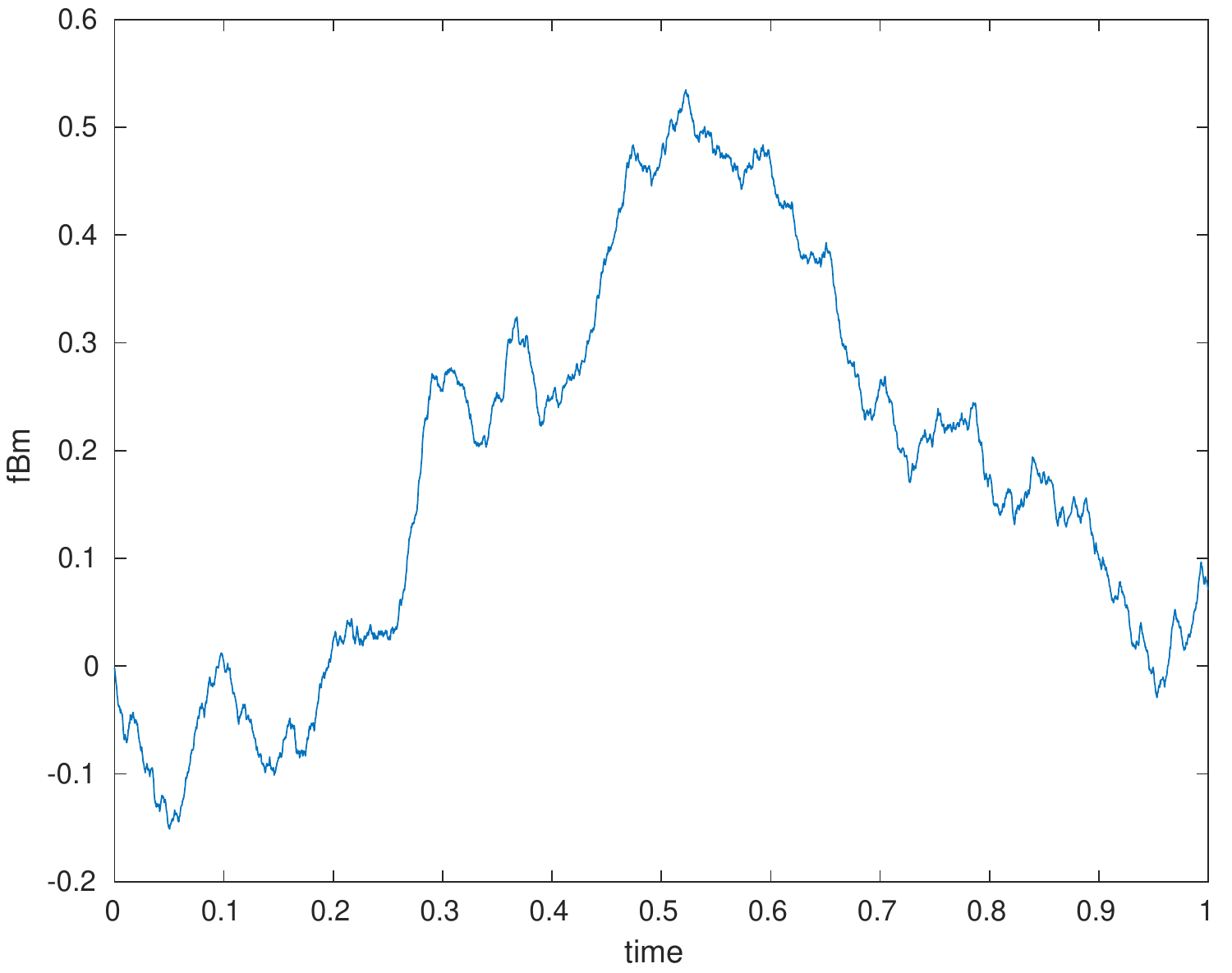}\label{fig:fbm75}}
  \subfloat[2. order]{\includegraphics[width=0.5\textwidth]{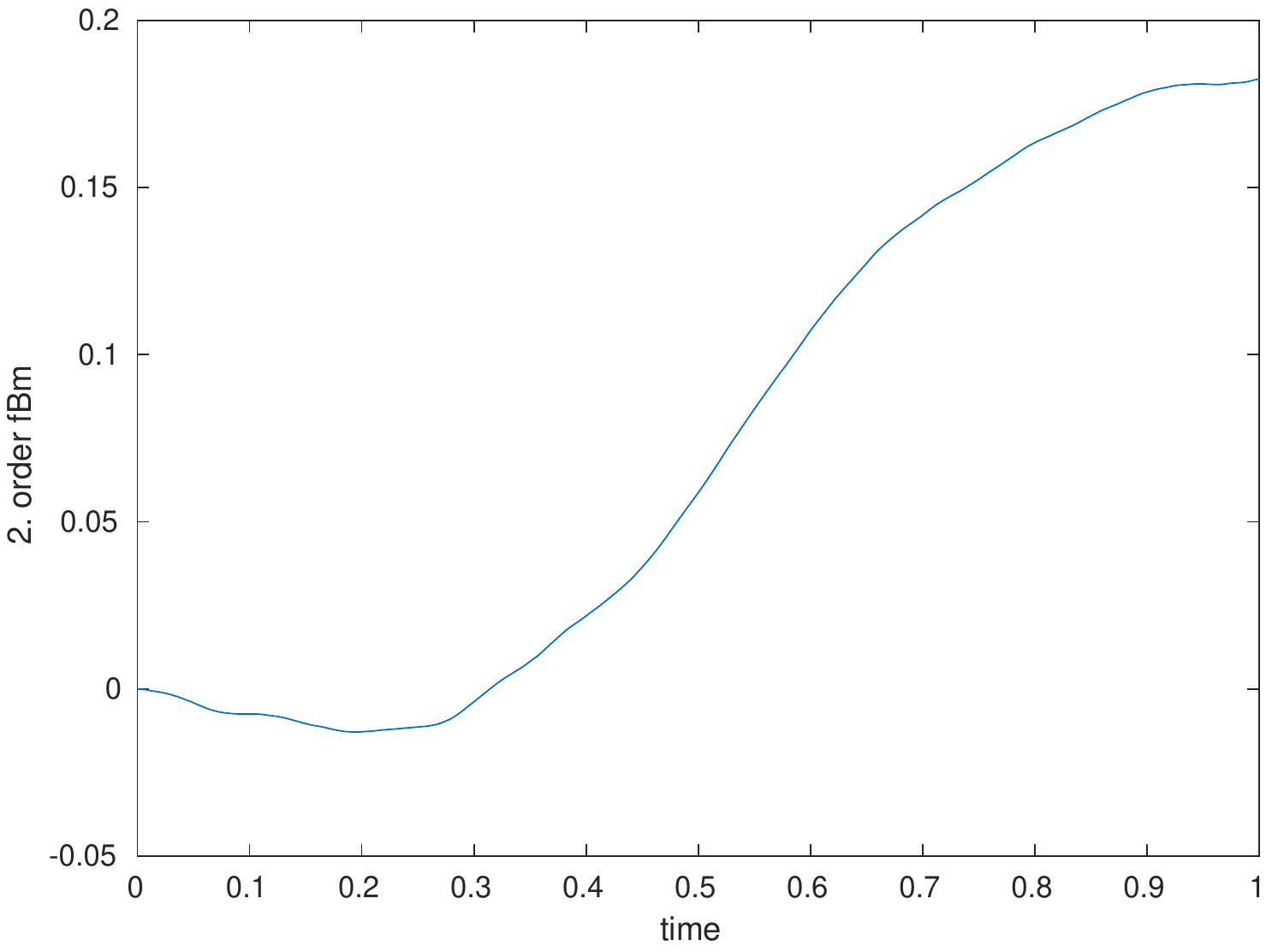}\label{fig:2fbm75}}\\ 
    \subfloat[3. order]{\includegraphics[width=0.5\textwidth]{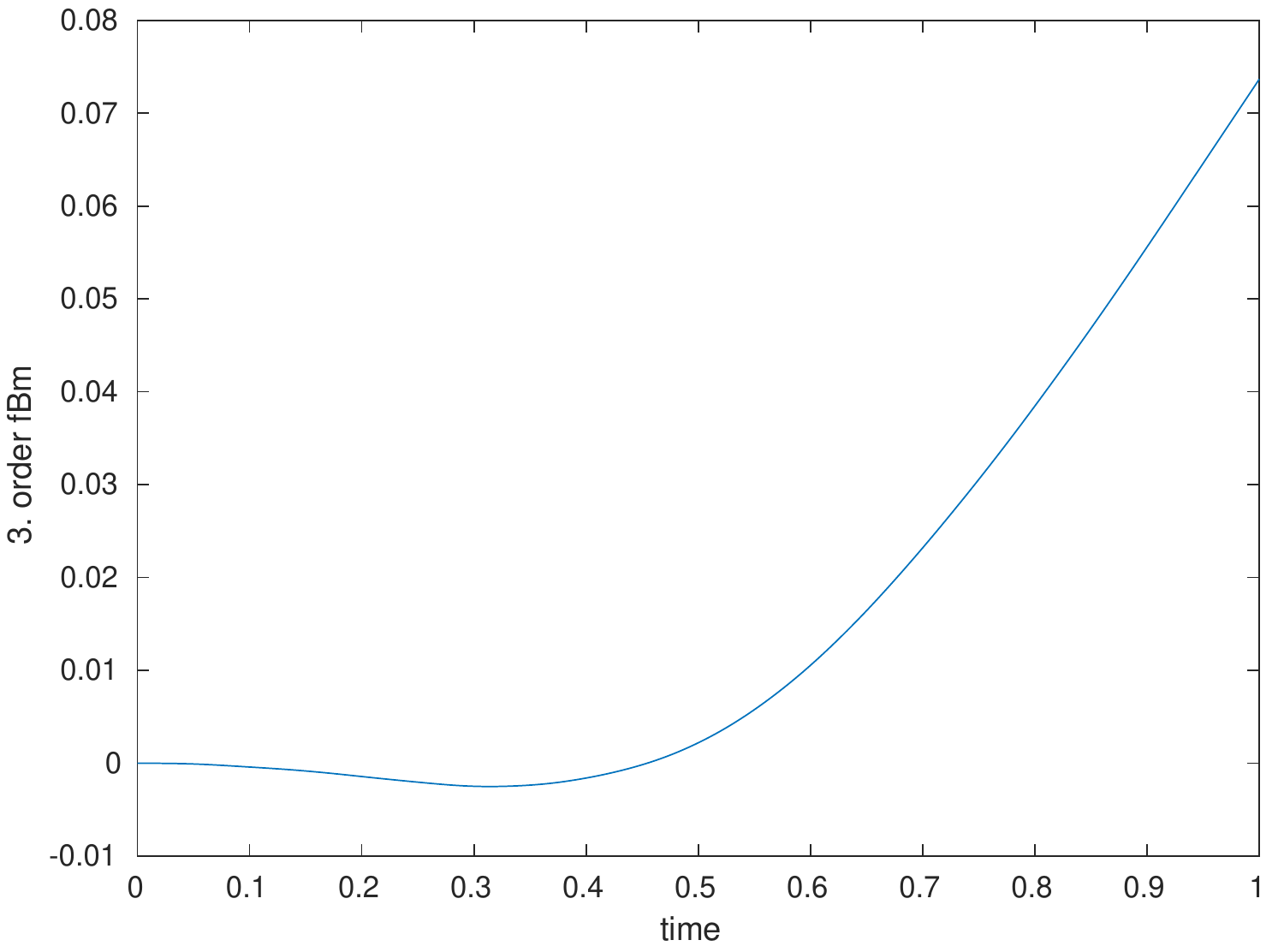}\label{fig:3fbm75}}
  \subfloat[4. order.]{\includegraphics[width=0.5\textwidth]{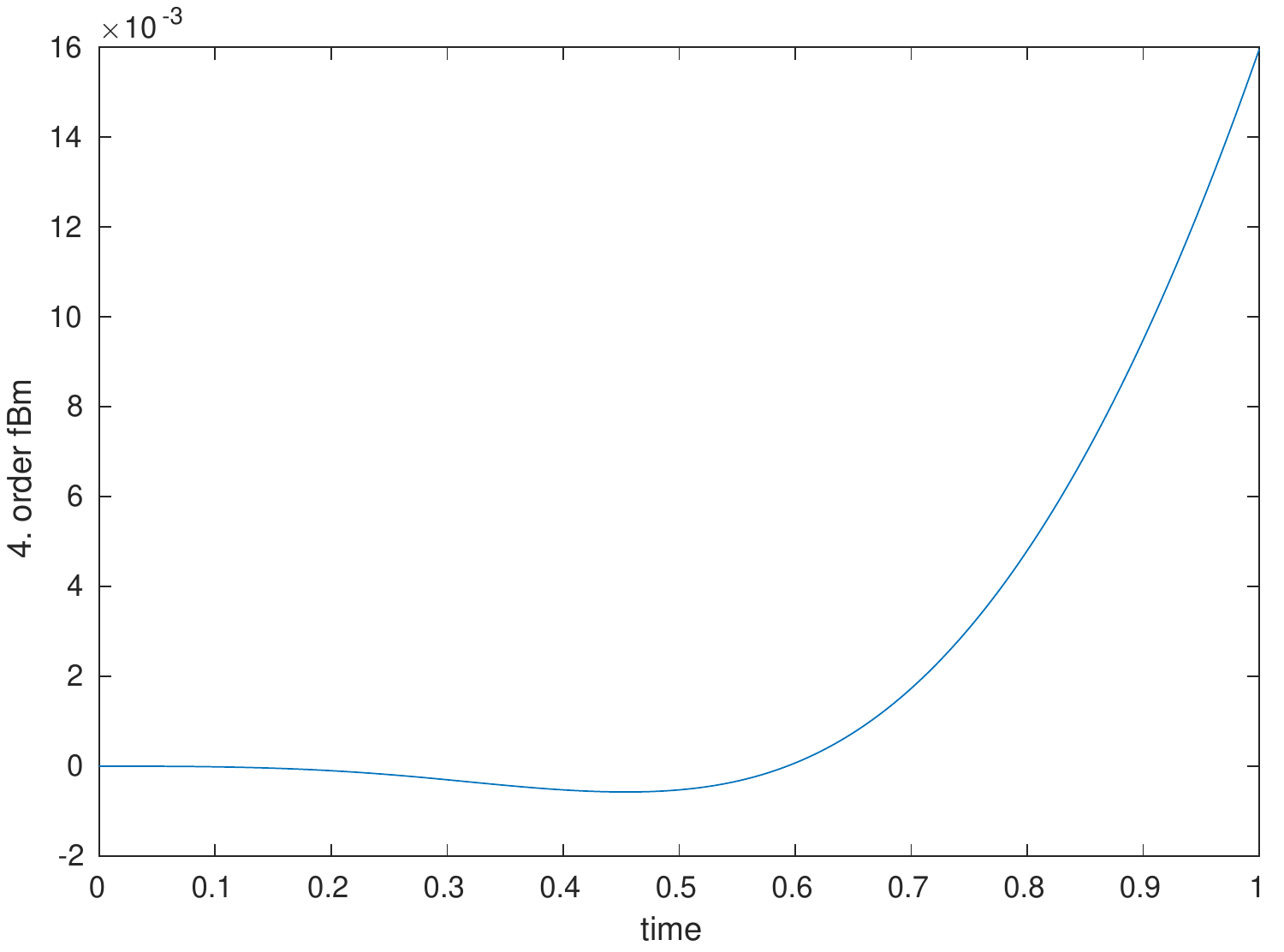}\label{fig:4fbm75}}\\
\end{tabular}
  \caption{1-4. order fractional Brownian motion with $H=0.75$.}
\end{figure}

\begin{figure}[!htbp]
  \centering
  \begin{tabular}{cc}
  \subfloat[1. order]{\includegraphics[width=0.5\textwidth]{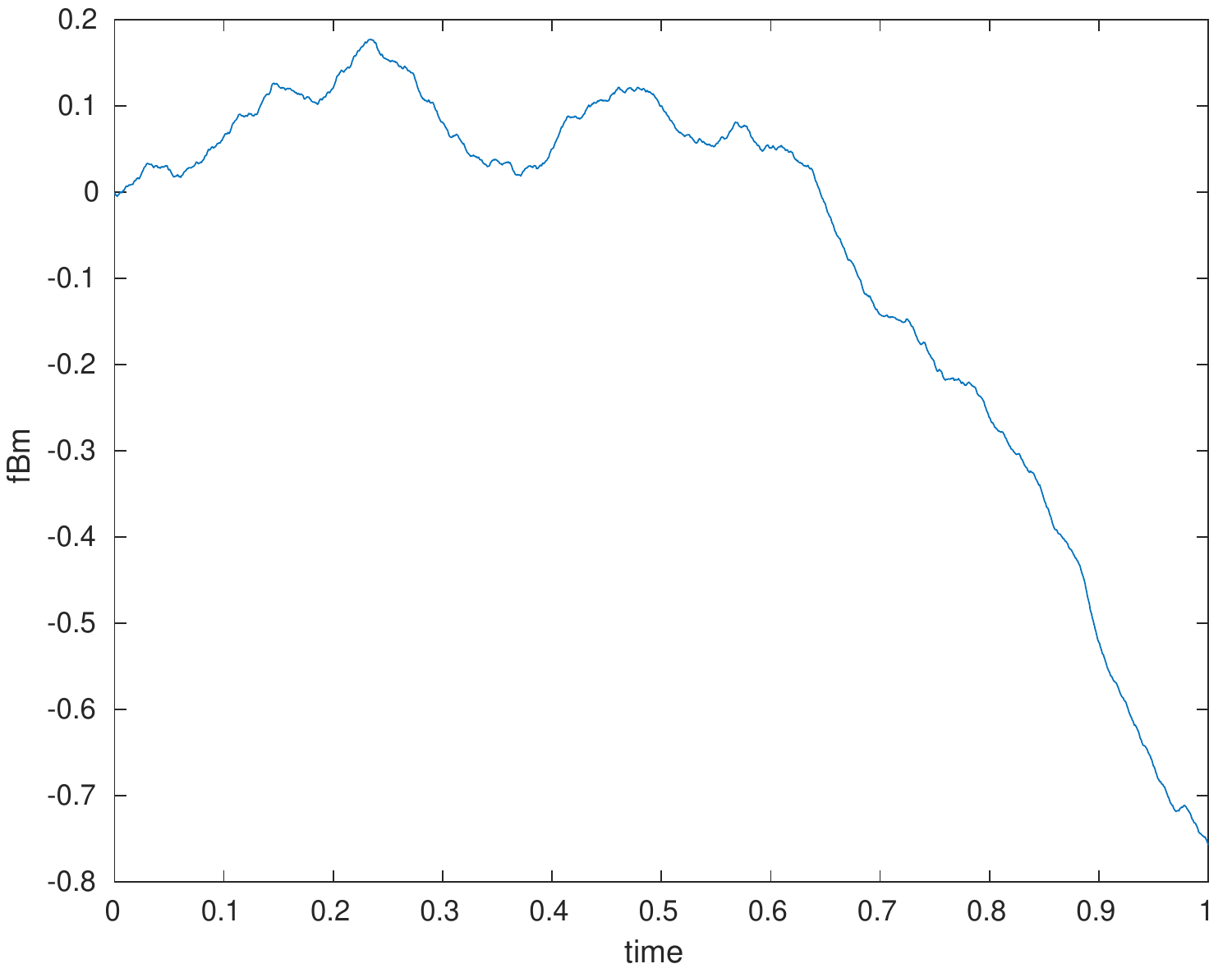}\label{fig:fbm9}}
  \subfloat[2. order]{\includegraphics[width=0.5\textwidth]{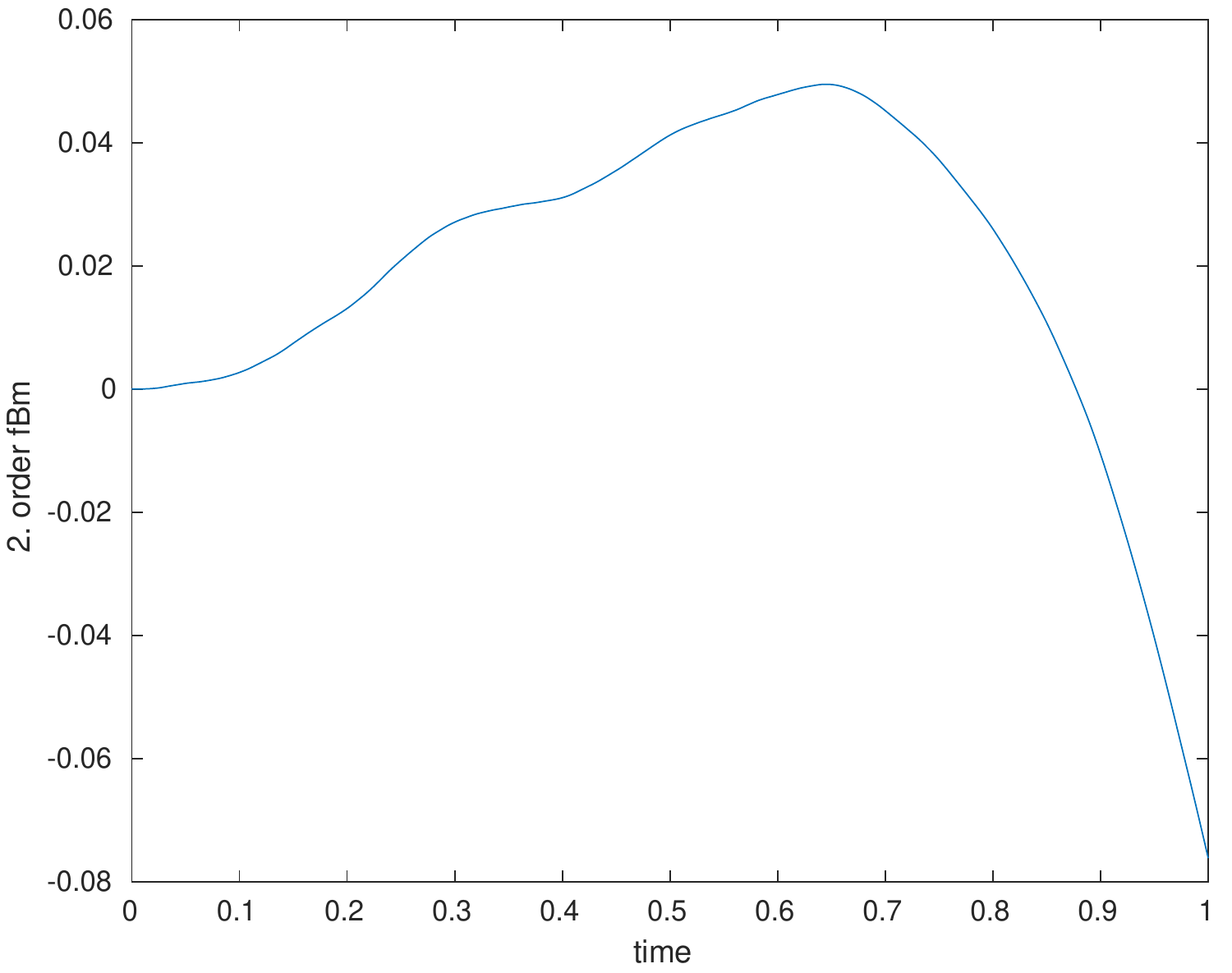}\label{fig:2fbm9}}\\ 
    \subfloat[3. order]{\includegraphics[width=0.5\textwidth]{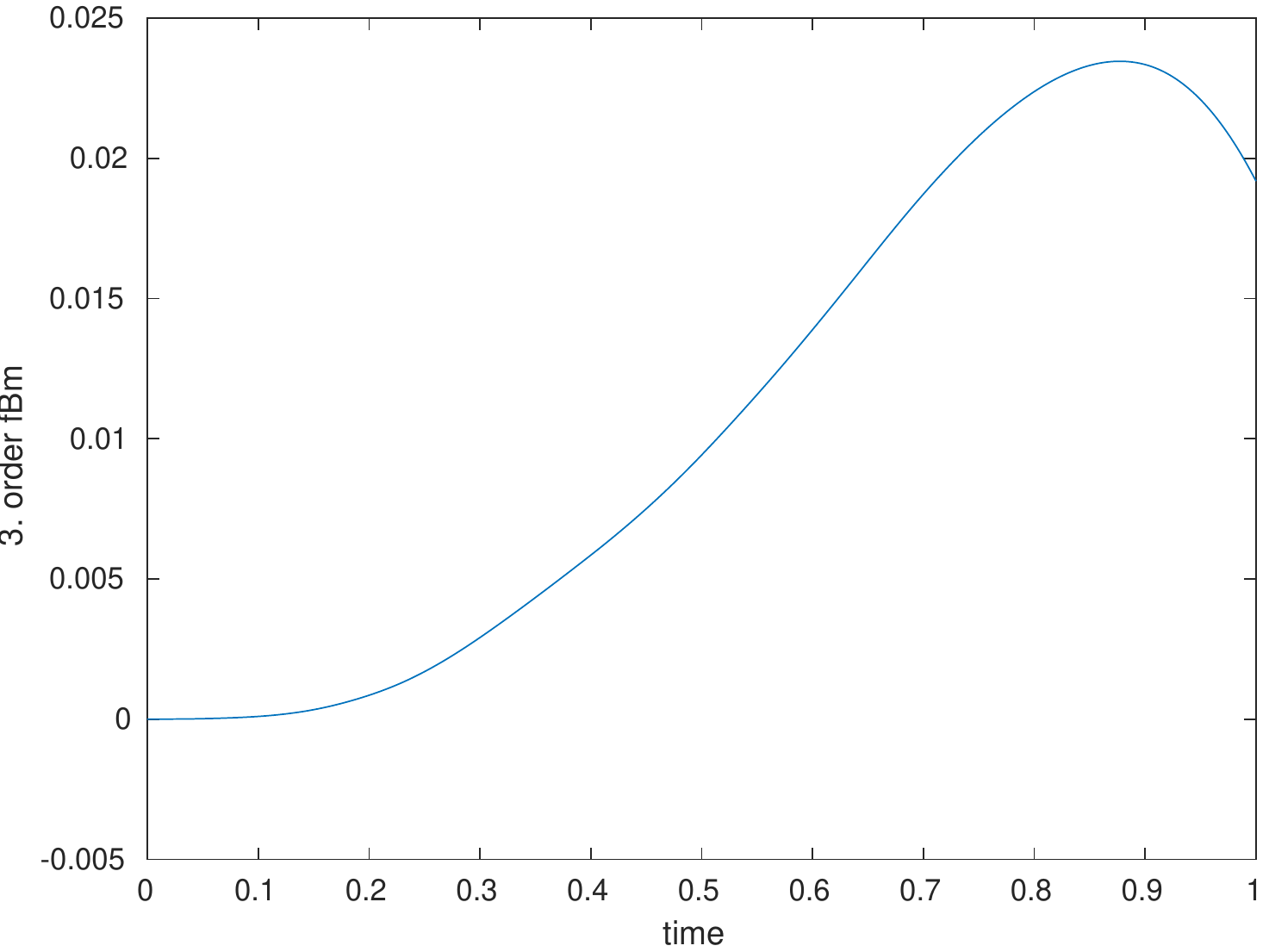}\label{fig:3fbm9}}
  \subfloat[4. order.]{\includegraphics[width=0.5\textwidth]{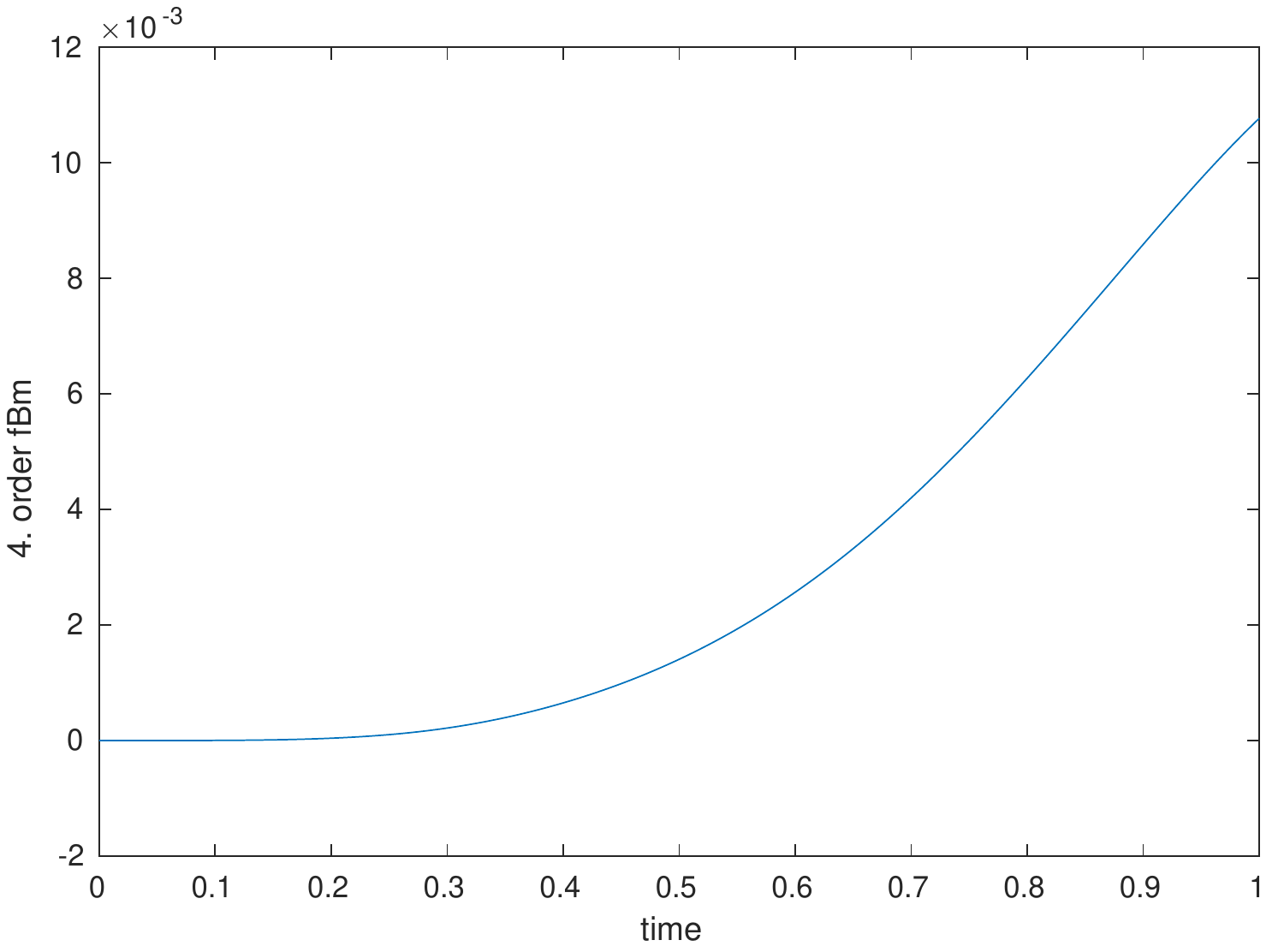}\label{fig:4fbm9}}\\
\end{tabular}
  \caption{1-4. order fractional Brownian motion with $H=0.9$.}
\end{figure}

\section{Concluding remarks}\label{sec:conclusion}
Perrin et al. \cite{Perrin-Harba-Berzin-Joseph-Iribarren-Bonami-2001} defined the $n$th order fractional Brownian motion by using the Mandelbrot--Van Ness representation \eqref{eq:mvn}, which lead to the formula \eqref{eq:mvn-nfbm}. In this article we have defined the $n$th order fractional Brownian motion $B_H^{(n)}$ of Hurst index $H\in(n-1,n)$ by using the Molchan--Golosov representation of the fractional Brownian motion. In addition, we have provided the transfer principle for $B_H^{(n)}$ that can be used to develop stochastic calculus with respect to $B_H^{(n)}$ in a relatively simple manner. In addition, our compact interval representation is very useful, e.g. for simulations. Finally, for filtering problems it is of utmost important that the filtrations of $B_H^{(n)}$ and the corresponding Brownian motion $W$ coincide. We have shown here that by using the Molchan--Golosov representation, this property is inherited directly from the same property of the underlying fractional Brownian motion. We have also used this fact to study prediction law of $B_H^{(n)}$ and the equivalence of law problem that is important for maximum likelihood estimation.


\bibliographystyle{siam}
\bibliography{pipliateekki}

\begin{thebibliography}{10}

\bibitem{Alos-Mazet-Nualart-2001}
{\sc E.~Al{\`o}s, O.~Mazet, and D.~Nualart}, {\em Stochastic calculus with
  respect to {G}aussian processes}, Ann. Probab., 29 (2001), pp.~766--801.

\bibitem{aurzada-2011}
{\sc F.~Aurzada}, {\em Path regularity of {G}aussian processes via small
  deviations}, Probab. Math. Stat., 31 (2011), pp.~61--78.

\bibitem{aurzada-2013}
{\sc F.~Aurzada, F.~Gao, T.~K\"uhn, W.~Li, and Q.-M. Shao}, {\em Small
  deviations for a family of smooth {G}aussian processes}, J. Theor. Prob., 26
  (2013), pp.~153--168.

\bibitem{aurzada-2009}
{\sc F.~Aurzada, I.~Ibragimov, M.~Lifshits, and J.~van Zanten}, {\em Small
  deviations of smooth stationary {G}aussian processes}, Theory Probab. Appl.,
  53 (2009), pp.~697--707.

\bibitem{bro-dav}
{\sc P.~Brockwell and R.~Davis}, {\em Time Series: Theory and Methods}, vol.~2,
  Springer, 1991.

\bibitem{Hitsuda-1968}
{\sc M.~Hitsuda}, {\em Representation of {G}aussian processes equivalent to
  {W}iener process}, Osaka J. Math., 5 (1968), pp.~299--312.

\bibitem{Janson-1997}
{\sc S.~Janson}, {\em Gaussian {H}ilbert spaces}, vol.~129 of Cambridge Tracts
  in Mathematics, Cambridge University Press, Cambridge, 1997.

\bibitem{Mandelbrot-van-Ness-1968}
{\sc B.~B. Mandelbrot and J.~W. Van~Ness}, {\em Fractional {B}rownian motions,
  fractional noises and applications}, SIAM Rev., 10 (1968), pp.~422--437.

\bibitem{Mishura-2008}
{\sc Y.~S. Mishura}, {\em Stochastic calculus for fractional {B}rownian motion
  and related processes}, vol.~1929 of Lecture Notes in Mathematics,
  Springer-Verlag, Berlin, 2008.

\bibitem{Molchan-2003}
{\sc G.~M. Molchan}, {\em Historical comments related to fractional {B}rownian
  motion}, in Theory and applications of long-range dependence, Birkh\"auser
  Boston, Boston, MA, 2003, pp.~39--42.

\bibitem{Molchan-Golosov-1969}
{\sc G.~M. Molchan and J.~I. Golosov}, {\em Gaussian stationary processes with
  asymptotically a power spectrum}, Dokl. Akad. Nauk SSSR, 184 (1969),
  pp.~546--549.

\bibitem{Norros-Valkeila-Virtamo-1999}
{\sc I.~Norros, E.~Valkeila, and J.~Virtamo}, {\em An elementary approach to a
  {G}irsanov formula and other analytical results on fractional {B}rownian
  motions}, Bernoulli, 5 (1999), pp.~571--587.

\bibitem{Perrin-Harba-Berzin-Joseph-Iribarren-Bonami-2001}
{\sc E.~Perrin, R.~Harba, C.~Berzin-Joseph, I.~Iribarren, and A.~Bonami}, {\em
  nth-order fractional {B}rownian motion and fractional {G}aussian noises},
  IEEE Transactions on Signal Processing, 49 (2001), pp.~1049--1059.

\bibitem{Pipiras-Taqqu-2001}
{\sc V.~Pipiras and M.~S. Taqqu}, {\em Are classes of deterministic integrands
  for fractional {B}rownian motion on an interval complete?}, Bernoulli, 7
  (2001), pp.~873--897.

\bibitem{Smithies-1958}
{\sc F.~Smithies}, {\em Integral Equations}, Cambridge Tracts in Mathematics,
  Cambridge University Press, 1958.

\bibitem{Sottinen-2004}
{\sc T.~Sottinen}, {\em On {G}aussian processes equivalent in law to fractional
  {B}rownian motion}, J. Theoret. Probab., 17 (2004), pp.~309--325.

\bibitem{Sottinen-Tudor-2006}
{\sc T.~Sottinen and C.~A. Tudor}, {\em On the equivalence of multiparameter
  {G}aussian processes}, J. Theoret. Probab., 19 (2006), pp.~461--485.

\bibitem{fredholm}
{\sc T.~Sottinen and L.~Viitasaari}, {\em Stochastic analysis of {G}aussian
  processes via {F}redholm representation}, International Journal of Stochastic
  Analysis,  (2016), p.~DOI:10.1155/2016/8694365.

\bibitem{Sottinen-Viitasaari-2017b}
{\sc T.~Sottinen and L.~Viitasaari}, {\em Prediction law of fractional
  {B}rownian motion}, Stat. Probab. Lett., 129 (2017), pp.~155--166.

\end{thebibliography}
\end{document}